\newtheorem{definition}{Definition}[section]
\newtheorem{theorem}[definition]{Theorem}
\newtheorem{corollary}[definition]{Corollary}
\newtheorem{remark}[definition]{Remark}
\newtheorem{proposition}[definition]{Proposition}
\newtheorem{lemma}[definition]{Lemma}
\newtheorem*{theorem*}{Theorem}
\newcommand{\ii}{\ensuremath{\imath}}
\newcommand{\RR}{\ensuremath{\mathbb{R}}}
\newcommand{\dd}{\mathrm{d}}
\newcommand{\supp}{\ensuremath{\mathrm{supp}\,}}
\newcommand{\EE}{\mathbb{E}}
\newcommand{\D}{\nabla}
\newcommand{\Wick}[1]{\mathbf{:}#1\mathbf{:}}
\newcommand{\wDY}{\Wick{\D Y^2}}
\newcommand{\wDYe}{\Wick{\D Y^2_\varepsilon}}
\newcommand{\tDY}{\widetilde{\wDY}}
\newcommand{\tDYe}{\widetilde{\wDYe}}
\newcommand{\w}{\langle x\rangle}
\newcommand{\R}[1]{\mathrm{Re}\left( #1\right)}
\newcommand{\I}[1]{\mathrm{Im}\left( #1 \right)}
\newcommand{\vk}{\ensuremath{v_{2^{-k}}}}
\newcommand{\vkp}{\ensuremath{v_{2^{-k-1}}}}
\newcommand{\Yk}{Y_{2^{-k}}}
\newcommand{\Ykp}{Y_{2^{-k-1}}}
\title{Solution to the stochastic Schr\"odinger equation on the full space}
\author{
  Arnaud Debussche\thanks{Partially supported by the French government thanks to the
``Investissements d'Avenir" program ANR-11-LABX-0020-01.}\\
  IRMAR, ENS Rennes, UBL, CNRS \\
\and 
Jörg Martin\thanks{Financial support by the DFG via Research Training Group RTG 1845 is gratefully acknowledged.} \\
  Humboldt--Universit\"at zu Berlin 
  }
\begin{document}
\maketitle

\begin{abstract}
We here show how the methods recently applied in \cite{SchroedingerTorus} to solve the  stochastic nonlinear Schrödinger equation on $\mathbb{T}^2$ can be enhanced to yield solutions on $\RR^2$ if the non-linearity is weak enough. We prove that the solutions remains localized on compact time intervals which allows us to apply energy methods on the full space. 
\end{abstract}
\section{Introduction}
We here study the following Cauchy problem on $[0,T]\times \RR^2$
\begin{align}
\label{eq:NLS}
\ii \partial_t u =\Delta u+\lambda u|u|^{2\sigma} + u\xi, \, u(0)=u_0
\end{align} 
where $\sigma>0,\,\lambda\in \RR$ and $\xi\in \mathcal{S}'(\RR^2)$ stands for white noise in space. \eqref{eq:NLS} can be seen as stochastic version of the well-studied  deterministic nonlinear Schrödinger equation (for example \cite{Cazenave}, \cite{Kato}, \cite{Cazenave1979}, \cite{BrezisGallouet}), where the noise term $u\xi $ is absent. The case $\sigma<1$ is known as the subcritical regime and $\sigma\geq 1$ as (super-)critical. For a positive coefficient $\lambda>0$ the equation is called focusing, while for $\lambda<0$ one uses the term defocusing. 

While the deterministic equation arises in nonlinear optics to model laser propagation in a dispersive material \cite[Section 1.1.-1.3.]{Berge}, the stochastic term can be seen as taking into account disorder in the considered medium. In the physical context interesting effects such as Anderson localization and the existence of solitons have been observed \cite{AndersonLocalization, Conti, ContiFolli}.

The existence of solutions for \eqref{eq:NLS} with $\sigma=1$ on a periodic setup $[0,T]\times \mathbb{T}^2$ ($\mathbb{T}^2$ denoting the torus) was recently studied in \cite{SchroedingerTorus}, where the authors prove global existence of solutions. However, most of physical phenomena described by \eqref{eq:NLS} are set on the full space $\RR^2$. 
For instance, if one is interested in solitary waves and their stability, it is important to have a notion for a solution on $\RR^2$. This question is the main motivation for this work and will
be the object of further studies.  

In this article we show that the equation does indeed possess solutions on the full space if the non-linearity is not too strong. Our main results can be summarized as follows.  

\begin{theorem}
Under suitable initial conditions $u_0$ \eqref{eq:NLS} has a unique local solution on $[0,T]\times \RR^2$ for some random time $T>0$ if $\lambda\leq 0$ or $\sigma<1$.  If $\sigma<1/2$ (or $\lambda=0$) the solution is global. 
\end{theorem}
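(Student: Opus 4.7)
My plan is to follow the gauge-transformation strategy of \cite{SchroedingerTorus} on the torus and adapt it to the weighted setting needed on the full space. After mollifying the noise as $\xi_\varepsilon$, one introduces the stationary solution $Y_\varepsilon$ of an elliptic problem of the form $(1-\Delta) Y_\varepsilon = \xi_\varepsilon$ and performs the complex gauge transformation $u_\varepsilon = e^{\ii Y_\varepsilon} v_\varepsilon$. A direct computation shows that the problematic multiplicative term $u\xi$ in \eqref{eq:NLS} is transformed into a first-order transport term involving $\D Y_\varepsilon$ plus a zeroth-order potential which, after Wick renormalization, becomes the finite object $\wDY\, v$. This is exactly the reduction already exploited on $\TT^2$; the novelty here is that $Y$ and $\wDY$ must live in weighted Besov spaces on $\RR^2$ rather than in ordinary ones.

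The first technical step is to construct the enhanced driving pair $(Y,\wDY)$ as the limit of $(Y_\varepsilon,\wDYe)$ in a weighted Besov topology based on the weight $\w^{-\kappa}$ for an arbitrarily small $\kappa>0$. By the translation invariance of $\xi$, the $L^p$ moment estimates on the whole plane reduce to a summable family of local estimates over unit balls centered at points of $\ZZ^2$, yielding weighted analogues of the standard stochastic bounds. Given this input, local well-posedness of the transformed equation should follow from a paracontrolled fixed-point argument in a space of the form $C_T H^s_{\w^{\alpha}}(\RR^2)$ with $s$ close to $1$ and a small negative $\alpha$; the paracontrolled ansatz of \cite{SchroedingerTorus} can be taken over almost verbatim. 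The restriction $\lambda\leq 0$ or $\sigma<1$ enters when the nonlinearity $|v|^{2\sigma}v$ is estimated, where one needs either a defocusing sign or subcritical scaling in order to close the contraction. I expect that the main technical obstacle is in this step: one must carefully control commutators between the weight $\w^{\alpha}$ and the paraproduct/resonant terms appearing in the paracontrolled ansatz, and verify that the localization of $v$ propagates under the transformed flow on any compact time interval -- this is the quantitative content of the statement in the abstract that the solution remains localized.

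For the global statement I would rely on the conservation of the mass $\|u(t)\|_{L^2}=\|u_0\|_{L^2}$, which formally follows from the fact that $\xi$ is real and which passes to the limit in the renormalized equation. When $\lambda=0$ the equation for $v$ is linear given the driving enhancement, and a direct weighted energy estimate yields global existence. When $\sigma<1/2$ the equation is mass-subcritical (the mass-critical exponent in dimension two being $\sigma=1/2$), so that the nonlinear term $\|v\|_{L^{2\sigma+2}}^{2\sigma+2}$ is controlled via the Gagliardo--Nirenberg inequality by the conserved mass and a strictly sub-quadratic power of $\|\D v\|_{L^2}$. A Grönwall argument applied to $\|\D v\|_{L^2}^2$, after absorbing the stochastic corrections by means of the bounds obtained in the local analysis, then provides a global a priori bound that excludes finite-time blow-up and extends the local solution to all times.
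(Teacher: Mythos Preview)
Your proposal diverges from the paper in several essential ways, and at least two of them are genuine gaps rather than alternative routes.

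First, the gauge is real, not complex: the paper sets $v=e^{Y}u$ with $Y$ real, solving $\Delta Y=\xi+\varphi\ast\xi$ via a truncated Green's function. Your complex phase $u=e^{\ii Y}v$ with $(1-\Delta)Y=\xi$ does not kill the term $u\xi$: a direct computation gives the transformed potential $(-|\D Y|^2+\ii\Delta Y+\xi)v$, and since $\xi$ is real while $\ii\Delta Y$ is purely imaginary, no cancellation occurs. The structure you describe afterwards (first-order transport plus Wick-renormalized $\wDY$) is exactly what the \emph{real} gauge produces, so this may be a slip, but as written your transformation does not remove the singularity.

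Second, the paper does not run a paracontrolled fixed point. No Schr\"odinger smoothing is available for the operator with rough potential, and the authors say explicitly that they do not know dispersive estimates here. Instead the argument is by compactness: solve the mollified equation for each $\varepsilon$, obtain an $\varepsilon$-uniform $H^1(\w^{-\delta})$ bound from the conserved energy $\tilde H_\varepsilon$, upgrade to $H^2(\w^{-\delta})$ by a Gr\"onwall estimate on the mass of $w_\varepsilon:=\partial_t v_\varepsilon$, and show the dyadic sequence $v_{2^{-k}}$ is Cauchy in $C_T H^\gamma(\w^\delta)$. The constraint $\lambda\le 0$ or $\sigma<1$ does not enter a contraction mapping; it enters the $H^1$ energy estimate (Proposition~\ref{prop:H1Bound}), where the term $\lambda\int |v_\varepsilon|^{2\sigma+2}e^{-(2\sigma+2)Y_\varepsilon}$ must either have a good sign or be absorbed via Gagliardo--Nirenberg, which requires $2\sigma<2$. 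The weighted localization you allude to is Lemma~\ref{lem:H1GivesMoments}, used precisely inside this energy argument.

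Third, and most importantly, your explanation of the threshold $\sigma<1/2$ is wrong. The mass-critical exponent in dimension two is $\sigma=1$, not $\sigma=1/2$; mass-subcriticality is already the condition $\sigma<1$ used for the local result and cannot account for the stronger restriction. In the paper $\sigma<1/2$ arises from a Brezis--Gallouet argument at the $H^2$ level: the $H^2$ bound of Lemma~\ref{lem:BlowUpH2} carries a factor $\exp\big(CT\|v_\varepsilon e^{-Y_\varepsilon}\|_{L^\infty}^{2\sigma}\big)(1+|\log\varepsilon|^a)$, while the weighted Brezis--Gallouet inequality (Lemma~\ref{lem:BrezisGallouet}) gives $\|v_\varepsilon e^{-Y_\varepsilon}\|_{L^\infty}\lesssim |\log\varepsilon|^{1+\kappa}+\log(1+\|v_\varepsilon\|_{H^\gamma})$. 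Feeding one into the other, the powers of $|\log\varepsilon|$ close and the dyadic sequence becomes summable only when $2\sigma(1+\kappa)<1$, i.e.\ $\sigma<1/2$. A Gr\"onwall on $\|\D v\|_{L^2}^2$ alone, as you propose, does not see this obstruction and would wrongly predict global existence for all $\sigma<1$.
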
 
For a more precise formulation see Theorems \ref{thm:LocalExistence} and  \ref{thm:GlobalExistence} below.

It might seem surprising that even for local well-posedness the sign of $\lambda$ matters. Recall that in the absence of noise, local well-posedness is known in the energy space 
thanks to the Strichartz estimates. We do not know whether similar estimates hold for the noisy equation considered here. 
As in \cite{SchroedingerTorus} we rely on a transformation of the equation that stems originally from \cite{HairerLabbe}  and make use of conservation of mass and energy to prove a priori bounds for \eqref{eq:NLS}. An application of these methodes is more delicate on the full space since one needs controll over the growth of the solution to counterbalance the noise. A key role in this task will be played by Lemma \ref{lem:H1GivesMoments} below which allows us to trade some differentiability of the solution against some localization on compact time intervals. 

Our analysis will be formulated in the language of weighted Besov spaces whose definition and fundamental properties we recall in Section \ref{sec:DefinitionsAndBasics} below. We then describe the growth behaviour of the noise in our equation in this framework and finally give a rigorous formulation of the problem we solve. Roughly speaking, Section \ref{sec:H1} is devoted to the controll of moments, while \ref{sec:LocalExistence} and \ref{sec:GlobalExistence} show an $H^2$ bound which allows for a solution of \eqref{eq:NLS} in Theorems \ref{thm:LocalExistence} and \ref{thm:GlobalExistence}.

\subsection*{Notation}
We write $C>0$ for deterministic constants and $K_\varepsilon$ for random constants depending on $\varepsilon\in (0,1]$ with $L^p(\mathbb{P}),\,p\in [1,\infty)$ norms bounded independent of $\varepsilon$. We also write $K_k=K_{2^{-k-1}} K_{2^{-k}}$ and indicate further by this notation that this sequence is bounded almost surely in $k=0,1,2,\ldots\,$. We sometimes use $a>0$ for a positive, deterministic constant appearing in the exponent. The constants $C,\,K_\varepsilon,\,K_k$ and $a$ may change from one line to another.

An important role in this paper will be played by the polynomial weights
\begin{align*}
	\w^\mu:=(1+|x|^2)^{\mu/2}\,,
\end{align*}
with $\mu\in \RR$. For weighted $L^p$ spaces $L^p(\RR^d,\w^\mu)=L^p(\w^\mu),\,p\in [1,\infty],\,\mu\in \RR$ we here use the convention
\begin{align*}
\|f\|_{L^p(\w^\mu)}:=\|\w^\mu f\|_{L^p(\RR^d)}=\left( \int_{\RR^d} |f(x)\w^\mu|^p \, \dd x \right)^{1/p}\,.
\end{align*}
(with the usual interpretation if $p=\infty$). Most of the further notation and used function spaces are described in subsection  \ref{subsec:BesovSpaces}. 
\section{Definitions and Basics}
\label{sec:DefinitionsAndBasics}
\subsection{Weighted Besov and Sobolev spaces}
\label{subsec:BesovSpaces}
We follow here closely \cite{TriebelIII} and \cite{Bahouri} and refer to these books for details. The definition of weighted Besov spaces used in this article is
\begin{align*}
\mathcal{B}^{\alpha}_{p,q}(\RR^d,\w^\mu)=\mathcal{B}^{\alpha}_{p,q}(\w^\mu)=\left\{f\in \mathcal{S}'(\RR^d)\,\vert\, \|f\|_{\mathcal{B}^\alpha_{p,q}(\w^\mu)}:=\|2^{j\alpha } \|\varDelta_j f\|_{L^p(\w^\mu)}\|_{\ell^q}<\infty \right\}\,,
\end{align*}
where $(\varDelta_j)_{j=-1,0,\ldots}$ denotes some choice of Littlewood-Paley-blocks and where $p,q\in [1,\infty],\,\alpha,\mu\in \RR$. We will write $\mathcal{B}^{\alpha}_{p,q}=\mathcal{B}^{\alpha}_{p,q}(\RR^2)$ for the unweighted space $\mathcal{B}^{\alpha}_{p,q}(\w^0)$. A nice property of these spaces is that the weight can be "pulled in",
\begin{align}
\label{eq:PullWeight}
\|f\|_{\mathcal{B}^{\alpha}_{p,q}(\w^\mu)} \approx \|f \w^\mu \|_{\mathcal{B}^{\alpha}_{p,q}}\,,
\end{align}
in the sense of equivalent norms, for $\alpha,\,\mu\in \RR,\,p,q\in [1,\infty]$, compare \cite[Theorem 6.5]{TriebelIII}, a fact we will use throughout this paper to translate well-known results from the unweighted spaces to their weighted analogues. Among these are the following immediate consequences which we mention here for later usage.
\begin{lemma}\label{lem:BesovProperties}
The weighted Besov spaces defined above have the following properties:
\begin{enumerate}[(i)]
\item (Besov embedding)
For $\alpha_1,\,\alpha_2,\,\mu_1,\,\mu_2\in \RR$ and $p_1,\,p_2,\,q_1,\,q_2\in [1,\infty]$ with $\mu_1\leq \mu_2$, $\alpha_1-\frac{d}{p_1}\leq \alpha_2-\frac{d}{p_2}$, $q_2\leq q_1$ and $p_2\leq p_1$, we have the continuous embedding 
\begin{align*}
\mathcal{B}^{\alpha_2}_{p_2,q_2}(\RR^d,\w^{\mu_2})\subseteq  \mathcal{B}^{\alpha_1}_{p_1,q_1}(\RR^d,\w^{\mu_1})\,.
\end{align*}
\item (Sobolev embedding) For $\alpha>0,\, p\in [2,\infty]$ such that $-\frac{d}{p}\leq\alpha-\frac{d}{2}$ and $\mu_1,\mu_2\in \RR,\,\mu_1 \leq \mu_2$ we have the continuous embedding
\begin{align*}
\mathcal{B}^{\alpha}_{2,2}(\w^{\mu_2}) \subseteq L^p(\w^{\mu_1})\,.
\end{align*}
\item (Duality)
For $\alpha\in \RR,\,p,q\in [1,\infty),\,\mu\in \RR$ we have the duality
\begin{align*}
\left(\mathcal{B}^{\alpha}_{p,q}(\RR^d,\w^{\mu})\right)'= \mathcal{B}^{-\alpha}_{p',q'}(\RR^d,\w^{-\mu})\,.
\end{align*}
\item (Multiplication)
For $\mu_1,\,\mu_2\in \RR$, $p_1,\,p_2\in [2,\infty]$ and $\alpha_1,\,\alpha_2>0$ we have for $\alpha=\alpha_1 \wedge \alpha_2$, $\frac{1}{p}=\frac{1}{p_1}+\frac{1}{p_2}$ and $\mu=\mu_1+\mu_2$
\begin{align*}
\Big\|f_1\cdot f_2\|_{\mathcal{B}^{\alpha}_{p,p}(\w^\mu)}\leq C \|f_1\|_{\mathcal{B}^{\alpha_1}_{p_1,p_1}(\w^{\mu_1})} \|f_2\|_{\mathcal{B}^{\alpha_2}_{p_2,p_2}(\w^{\mu_2})}\,.
\end{align*}
\end{enumerate}
\end{lemma}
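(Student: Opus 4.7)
The overall plan is to prove all four items by reducing each to an unweighted Besov statement using the pull-in equivalence \eqref{eq:PullWeight}. Throughout I rely on one auxiliary observation: for any $\nu\in\RR$, the function $\w^\nu$ is smooth with $|\partial^\beta \w^\nu|\lesssim \w^{\nu-|\beta|}$, so for $\nu\leq 0$ it is bounded together with all its derivatives and therefore acts as a multiplier on each unweighted space $\mathcal{B}^{\alpha}_{p,q}(\RR^d)$, by standard paramultiplier results (compare \cite{Bahouri} or \cite{TriebelIII}).

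For (i), applying \eqref{eq:PullWeight} on both sides reduces the claim to bounding $\|(f\w^{\mu_2})\,\w^{\mu_1-\mu_2}\|_{\mathcal{B}^{\alpha_1}_{p_1,q_1}}$ by $\|f\w^{\mu_2}\|_{\mathcal{B}^{\alpha_2}_{p_2,q_2}}$. Since $\mu_1-\mu_2\leq 0$, the multiplier $\w^{\mu_1-\mu_2}$ is absorbed into the constant and what remains is the classical unweighted Besov embedding under the stated numerical conditions on the $\alpha_i,\,p_i,\,q_i$. Part (ii) follows by the same reduction combined with the unweighted Sobolev embedding $\mathcal{B}^{\alpha}_{2,2}(\RR^d)=H^{\alpha}(\RR^d)\hookrightarrow L^p(\RR^d)$. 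For (iii), I would write any $\ell\in (\mathcal{B}^{\alpha}_{p,q}(\w^\mu))'$ as $g\mapsto \tilde\ell(g\w^\mu)$ with $\tilde\ell\in (\mathcal{B}^{\alpha}_{p,q}(\RR^d))'$, identify $\tilde\ell$ with an element $h\in \mathcal{B}^{-\alpha}_{p',q'}(\RR^d)$ through the unweighted duality, and then unweight via \eqref{eq:PullWeight} to produce a representative in $\mathcal{B}^{-\alpha}_{p',q'}(\w^{-\mu})$; the reverse direction is symmetric.

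The most substantive item is (iv). Setting $g_i=f_i\w^{\mu_i}$ and observing that $f_1 f_2\w^{\mu_1+\mu_2}=g_1 g_2$, \eqref{eq:PullWeight} reduces the estimate to the unweighted paraproduct inequality
\begin{align*}
\|g_1 g_2\|_{\mathcal{B}^{\alpha}_{p,p}}\leq C\|g_1\|_{\mathcal{B}^{\alpha_1}_{p_1,p_1}}\|g_2\|_{\mathcal{B}^{\alpha_2}_{p_2,p_2}}\,,
\end{align*}
with $\alpha=\alpha_1\wedge\alpha_2$, $1/p=1/p_1+1/p_2$, $\alpha_1,\alpha_2>0$ and $p_1,p_2\geq 2$. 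This is a standard Bony-decomposition estimate: the two low-high paraproducts are controlled by a Young-type summation in the Littlewood-Paley frequencies (the condition $p_i\geq 2$ making the mixed-$L^p$ Bernstein estimates tight), and the resonant term converges because $\alpha_1+\alpha_2>0$, with a final loss down to $\alpha_1\wedge \alpha_2$.

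Since each of the four statements reduces to a textbook fact from \cite{TriebelIII} or \cite{Bahouri}, I do not anticipate a genuine obstacle. If anything, the most bookkeeping is needed in (iv), where one has to check that the H\"older relation on the exponents, the positivity of the regularities and the lower bound $p_i\geq 2$ together place the product into the parameter range for which the unweighted paraproduct inequality is explicitly proved in the references; everything else is mechanical once \eqref{eq:PullWeight} and the multiplier bound on $\w^{\mu_1-\mu_2}$ are in hand.
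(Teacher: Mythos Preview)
Your proposal is correct and matches the paper's approach: the paper itself does not give a proof but only a remark that (i)--(iv) are immediate consequences of the pull-in equivalence \eqref{eq:PullWeight} combined with the standard unweighted results in \cite{TriebelI}, \cite{TriebelIII}, \cite{Bahouri} and \cite{PromelTrabs}, which is precisely the reduction you carry out. Your write-up is in fact more explicit than the paper's, but the strategy is identical.
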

\begin{remark}
Property \textit{(iv)} is a special case of the far more general paraproduct inequalities. We refer to \cite{Bahouri} or, for a slightly more general version (which was applied here), to \cite{PromelTrabs}. Property \textit{(i)} and \textit{(iii)} are standard \cite[Theorem 2.7.1, Theorem 2.11.2]{TriebelI}, property \textit{(ii)} is probably also standard but is also a direct consequence of \cite[Theorem 6.7, 6.9]{TriebelIII}.
\end{remark}
Another fact we will use extensively is the interpolation between weighted Besov spaces with mixed weights, which we take from \cite[Theorem 4.5]{Sickel}.
\begin{lemma}\label{lem:BesovInterpolation}
Let $p_0,q_0,p_1,q_1\in [1,\infty],\,\alpha_0,\alpha_1,\mu_0,\mu_1\in \RR$ with $p_0\vee q_0<\infty$ or $p_1\vee q_1<\infty$. Let further $p,q,\alpha,\mu$ be such that there is a $\varTheta\in [0,1]$ with $\frac{1}{p}=\frac{1-\varTheta}{p_0}+\frac{\varTheta}{p_1},\,\frac{1}{q}=\frac{1-\varTheta}{q_0}+\frac{\varTheta}{q_1},\,\alpha=(1-\varTheta)\alpha_0+\varTheta \alpha_1$ and $\mu=(1-\varTheta)\mu_0+\varTheta \mu_1$. It holds
\begin{align*}
\|f\|_{\mathcal{B}^{\alpha}_{p,q}(\w^\mu)}\leq C \|f\|^{1-\varTheta}_{\mathcal{B}^{\alpha_0}_{p_0,q_0}(\w^{\mu_0})} \|f\|^\varTheta_{\mathcal{B}^{\alpha_1}_{p_1,q_1}(\w^{\mu_1})}\,.
\end{align*} 

\end{lemma}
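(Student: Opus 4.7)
The plan is to prove the bound directly from the Littlewood--Paley definition of $\mathcal{B}^\alpha_{p,q}(\w^\mu)$ by applying H\"older's inequality twice: once in the spatial $L^p$ layer, once in the dyadic $\ell^q$ layer. This avoids any abstract interpolation machinery; the statement can of course also be extracted from \cite[Theorem 4.5]{Sickel}, whose proof proceeds by complex interpolation and is the origin of the hypothesis $p_0\vee q_0<\infty$ or $p_1\vee q_1<\infty$.

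First I would exploit that the weight exponent is linear in $\mu$. Since $\mu=(1-\varTheta)\mu_0+\varTheta\mu_1$, the pointwise identity $\w^\mu|\varDelta_j f|=(\w^{\mu_0}|\varDelta_j f|)^{1-\varTheta}(\w^{\mu_1}|\varDelta_j f|)^\varTheta$ is immediate. Applying H\"older's inequality in $L^p$, using $\tfrac{1}{p}=\tfrac{1-\varTheta}{p_0}+\tfrac{\varTheta}{p_1}$ with conjugate pair $\tfrac{p_0}{(1-\varTheta)p}$ and $\tfrac{p_1}{\varTheta p}$, gives the block-level estimate $\|\varDelta_j f\|_{L^p(\w^\mu)}\leq \|\varDelta_j f\|_{L^{p_0}(\w^{\mu_0})}^{1-\varTheta}\|\varDelta_j f\|_{L^{p_1}(\w^{\mu_1})}^\varTheta$.

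Second, I would use the linearity in $\alpha$ to split $2^{j\alpha}=2^{j\alpha_0(1-\varTheta)}\cdot 2^{j\alpha_1\varTheta}$. Setting $A_j:=2^{j\alpha_0}\|\varDelta_j f\|_{L^{p_0}(\w^{\mu_0})}$ and $B_j:=2^{j\alpha_1}\|\varDelta_j f\|_{L^{p_1}(\w^{\mu_1})}$, the first step becomes $2^{j\alpha}\|\varDelta_j f\|_{L^p(\w^\mu)}\leq A_j^{1-\varTheta}B_j^\varTheta$. A second H\"older estimate in $\ell^q$ with $\tfrac{1}{q}=\tfrac{1-\varTheta}{q_0}+\tfrac{\varTheta}{q_1}$ then yields $\|A_j^{1-\varTheta}B_j^\varTheta\|_{\ell^q}\leq\|A_j\|_{\ell^{q_0}}^{1-\varTheta}\|B_j\|_{\ell^{q_1}}^\varTheta$, which is exactly the claim.

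No step is genuinely hard: the proof is bookkeeping that dovetails the four convexity identities (in $p,q,\alpha,\mu$) at the correct layers of the norm. The only conceptual point is that the pointwise factorization together with both H\"older steps survive the endpoints $\infty$ with the usual conventions, so this direct route actually does not need the finiteness condition on $(p_i,q_i)$ at all---that hypothesis only becomes necessary if one routes the argument through the complex interpolation method employed in \cite{Sickel}.
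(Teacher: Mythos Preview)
Your argument is correct. The paper does not actually prove this lemma; it is stated with the attribution ``which we take from \cite[Theorem 4.5]{Sickel}'' and no further justification. The cited result is obtained via the complex interpolation method, which is why the hypothesis $p_0\vee q_0<\infty$ or $p_1\vee q_1<\infty$ appears (complex interpolation of Besov spaces needs separability on at least one side).

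Your route---a pointwise factorisation of the weight followed by H\"older in $L^p$ and then in $\ell^q$---is genuinely different and strictly more elementary. It delivers the inequality with constant $C=1$ and, as you correctly observe, does not use the finiteness assumption on the $(p_i,q_i)$ at all; the endpoint cases are handled by the usual $L^\infty$/$\ell^\infty$ conventions in H\"older's inequality. What the abstract interpolation approach buys in exchange is the stronger statement that the weighted Besov scale is stable under complex interpolation as \emph{spaces}, not merely the multiplicative norm inequality; but for every application in this paper only the inequality is ever used, so your direct proof is entirely sufficient here.
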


We will frequently use the following identities for $\alpha,\,\mu\in \RR$
\begin{align*}
\mathcal{B}^{\alpha}_{2,2}(\w^\mu) &=H^\alpha(\w^\mu)\,,\\
\mathcal{B}^{\alpha}_{\infty,\infty}(\w^\mu) &=\mathcal{C}^{\alpha}(\w^\mu)\,.
\end{align*}
where $H^\alpha(\w^\mu)$ is the (weighted) Bessel-potential space given by those $f$ for which $\|\mathcal{F}^{-1}\langle\cdot\rangle^\alpha \mathcal{F}f\|_{L^2(\w^\mu)}<\infty$ and where $\mathcal{C}^\alpha(\w^\mu)$ is the (weighted) Hölder-Zygmund space that coincides with the classical Hölder space for $\alpha\in \RR_{+}\backslash \{0,1,2,\ldots\}$ with the equivalent norm
\begin{align}
\label{eq:ZygmundNorm}
\sum_{|k|\leq \lfloor \alpha\rfloor}\sup_{x} \,\w^{\mu}\,|\partial^k f(x)|+\sum_{|k|=\lfloor \alpha \rfloor }\sup_{0<|x-y|\leq 1}\w^\mu\frac{|\partial^k f(x)-\partial^k f(y)|}{|x-y|^{\alpha -\lfloor \alpha\rfloor}}\,.
\end{align}
We refer to \cite[Section 6.1, 6.2]{TriebelIII} for these identities. Restriction in \eqref{eq:ZygmundNorm} to $x,y\in \Omega$ and setting $\mu=0$ gives rise to the local H\"older space $\mathcal{C}^\alpha(\Omega)$. We can then bound \eqref{eq:ZygmundNorm} by 
\begin{align}
\label{eq:BoundZygmundByChunks}
\|f\|_{\mathcal{C}^{\alpha}(\w^\mu)}\leq C \sup_{k\in \mathbb{N}} \,\langle k\rangle^\mu \, \|f\|_{\mathcal{C}^{\alpha}([-k,k]^2)}
\end{align}
for $\mu\in \RR$ (with equivalence if $\mu\leq 0$).

\subsection{Growth of $\xi$ and $Y$}
\label{subsec:Noise}
We recall the following result on the growth of the white noise, which we take from \cite[Lemma 5.3]{ChoukAllez}.
\begin{lemma}
\label{lem:GrowthNoise}
Let $\xi$ be white noise on $\RR^2$ and pick uniformly bounded $\chi_k\in C^2(\RR^2)$ with $\supp\,\chi_k \subseteq [-k-1,k+1]^2$ and $\chi_k\vert_{[-k,k]^2}=1 $. For $\alpha<1$ there are $\lambda,\lambda'>0$ such that
\begin{align*}
	\sup_{k\in \mathbb{N}} \frac{\EE\left[\exp( \lambda \|\xi \chi_k\|^2_{{\mathcal{C}^{\alpha-2}(\RR^2)}})\right]}{k^{\lambda'}}<\infty\,.
\end{align*}
\end{lemma}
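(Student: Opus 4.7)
The plan is to establish a moment estimate
\[
  \EE\|\xi\chi_k\|^p_{\mathcal{C}^{\alpha-2}(\RR^2)}\;\leq\; (Cp)^{p/2}\, k^2 \qquad \text{for all $p \geq p_0$ large enough,}
\]
and then to extract exponential integrability by expanding the exponential as a power series. The essential feature is that $k$ should enter this bound through a fixed power, independent of $p$: once achieved, Stirling's formula turns the tail of the resulting series into a geometric one, convergent uniformly in $k$ for small enough $\lambda$.

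The moment bound itself rests on Besov embedding plus a direct Gaussian computation. Applying Lemma \ref{lem:BesovProperties}(i) with $d=2$, I reduce the $\mathcal{C}^{\alpha-2}(\RR^2)$-norm to the $\mathcal{B}^{\alpha-2+2/p}_{p,p}(\RR^2)$-norm for some large even integer $p$ to be chosen (the embedding condition reads $(\alpha-2+2/p)-2/p = \alpha-2$, which is trivially satisfied). Writing $K_j$ for the convolution kernel of $\varDelta_j$, each block $\varDelta_j(\xi\chi_k)(x) = \int K_j(x-y)\chi_k(y)\,\xi(\dd y)$ is a centred Gaussian with variance
\[
  \sigma_{k,j}(x)^2 \;=\; \int K_j(x-y)^2\chi_k(y)^2\,\dd y \;\leq\; \|\chi_k\|_{L^\infty}^2\,\|K_j\|_{L^2}^2 \;\lesssim\; 2^{2j}\,.
\]
Crucially, since $K_j$ is Schwartz and $\supp\chi_k\subseteq[-k-1,k+1]^2$, the function $x\mapsto\sigma_{k,j}(x)^2$ also decays rapidly outside a slight enlargement of this box, so that
\[
  \EE\|\varDelta_j(\xi\chi_k)\|^p_{L^p(\RR^2)} \;=\; \int \EE|\varDelta_j(\xi\chi_k)(x)|^p\,\dd x \;\lesssim\; (Cp)^{p/2}\,2^{jp}\, k^2\,,
\]
where the Gaussian moment formula contributes the $(Cp)^{p/2}2^{jp}$ and the $k^2$ reflects the area of effective support. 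Multiplying by $2^{jp(\alpha-2+2/p)}$ and summing over $j\geq -1$, the net exponent is $jp(\alpha-1) + 2j$, which is summable iff $p>2/(1-\alpha)$ --- this is the one place where $\alpha<1$ enters. Varying $p$ over even integers above this threshold yields the moment estimate at the top.

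Plugging into the power series,
\[
  \EE\exp\bigl(\lambda\|\xi\chi_k\|^2_{\mathcal{C}^{\alpha-2}(\RR^2)}\bigr) \;=\; \sum_{n\geq 0}\frac{\lambda^n}{n!}\,\EE\|\xi\chi_k\|^{2n}_{\mathcal{C}^{\alpha-2}(\RR^2)} \;\leq\; C + k^2 \sum_{n\geq n_0}\frac{(2C\lambda n)^{n}}{n!}\,,
\]
and Stirling reduces the tail to a geometric series of ratio $2eC\lambda$, convergent for $\lambda$ small enough uniformly in $k$. The lemma follows with $\lambda'=2$. The main obstacle in this approach is precisely to decouple the $k$- and $p$-dependence in the moment bound: a naive estimate relying only on the uniform variance bound $\sigma_{k,j}\lesssim 2^j$ would produce a factor of the form $k^{a(p)}$ with $a(p)\to\infty$, which upon summation over $n$ would force the admissible $\lambda$ to depend on $k$ and ruin the uniform statement. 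The spatial confinement of $\sigma_{k,j}$ coming from the Schwartz decay of $K_j$ is therefore indispensable.
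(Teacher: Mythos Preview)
Your argument is correct and, unlike the paper, actually constitutes a proof: the paper does not prove this lemma at all but simply imports it from \cite[Lemma~5.3]{ChoukAllez}, adding only a one-line remark that the cited result for periodic white noise $\xi_k$ on $[-k,k]^2$ transfers to $\chi_k\xi$ via the distributional identity $\xi_{k+1}\chi_k\overset{\dd}{=}\xi\chi_k$ and the norm comparison $\|\chi_k\xi_{k+1}\|_{\mathcal{C}^{\alpha-2}(\RR^2)}\leq C\|\xi_{k+1}\|_{\mathcal{C}^{\alpha-2}([-k,k]^2)}$. Your route---Besov embedding into $\mathcal{B}^{\alpha-2+2/p}_{p,p}$, pointwise Gaussian moment computation for the Littlewood--Paley blocks, then summation of the exponential series via Stirling---is the standard self-contained way to obtain such Fernique-type bounds and is presumably close to what \cite{ChoukAllez} does internally.

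Two small points of exposition. First, the spatial-confinement step is cleaner if you bypass the Schwartz-tail discussion entirely and simply write
\[
\int_{\RR^2}\sigma_{k,j}(x)^p\,\dd x\;\leq\;\|\sigma_{k,j}\|_{L^\infty}^{p-2}\int_{\RR^2}\sigma_{k,j}(x)^2\,\dd x\;\lesssim\;2^{j(p-2)}\cdot\|K_j\|_{L^2}^2\|\chi_k\|_{L^2}^2\;\lesssim\;2^{jp}k^2,
\]
which makes the decoupling of $k$ and $p$ completely transparent and avoids any worry about $j$-dependent enlargements. Second, the finitely many low moments $n<n_0$ are not bounded by a constant independent of $k$ but rather by $Ck^2$ (via H\"older from the $n_0$-th moment), so the split should read $Ck^2+k^2\sum_{n\geq n_0}(\cdots)$; this does not affect the conclusion $\lambda'=2$. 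Finally, it is worth noting explicitly that the Bernstein constant in the embedding $\mathcal{B}^{\alpha-2+2/p}_{p,p}\hookrightarrow\mathcal{C}^{\alpha-2}$ and the geometric sum over $j\geq0$ are both uniform in $p$, so that the only $p$-dependence in the moment bound is the Gaussian factor $(Cp)^{p/2}$, which is exactly what the Stirling step needs.
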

\begin{remark}
In \cite{ChoukAllez} the authors actually bound periodic white noise $\xi_k$ on $[-k,k]^2$ instead of $\chi_k\xi$. However this can be easily translated into the result above due to $\xi_{k+1}\chi_k\overset{\dd}{=}\xi \chi_k$ and $\|\chi_k\xi_{k+1}\|_{\mathcal{C}^{\alpha-2}(\RR^2)} \leq C \|\xi_{k+1}\|_{\mathcal{C}^{\alpha-2}([-k,k]^2 )}$ for $\alpha\in (0,1)$.
\end{remark}
We now proceed as in \cite{HairerLabbe} and use a truncated Green's function $G\in C^\infty(\RR^2\backslash\{0\})$ that satisfies $\supp G \subseteq B(0,1)$ and $G(x)=\frac{1}{2\pi} \log|x|$ for $|x|$ small enough, so that $Y:=G\ast \xi$ solves
\begin{align*}
\Delta Y=\xi+\varphi\ast \xi
\end{align*} 
for some $\varphi\in C^\infty_c(\RR^2)$. 
We recall the following result from \cite{HairerLabbe}.
\begin{lemma}
For any $p\in [1,\infty),\,\delta>0$ and $\alpha\in (0,1)$ we have
\begin{align*}
	\EE\left[\|Y\|_{\mathcal{C}^\alpha(\w^{-\delta})}^p+\|\wDY\|_{\mathcal{C}^{\alpha-1}(\w^{-\delta})}^p\right]<\infty\,.
\end{align*}
\end{lemma}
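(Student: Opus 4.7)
My plan is to handle the two terms separately: for $Y$, which has positive Hölder regularity, via a Schauder-type estimate combined with the local-to-global chunk bound \eqref{eq:BoundZygmundByChunks} and the Gaussian tails of Lemma~\ref{lem:GrowthNoise}; for $\wDY$, whose regularity is negative, via a Littlewood--Paley analysis in weighted Besov spaces, exploiting that $\wDY$ lies in the second Wiener chaos.

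For $Y$, since $\supp G\subseteq B(0,1)$ we have $Y(x)=G\ast (\chi_{k+1}\xi)(x)$ for every $x\in[-k,k]^2$, where $\chi_{k+1}$ is the cutoff of Lemma~\ref{lem:GrowthNoise}. A Schauder-type estimate for convolution with $G$, easily obtained via Littlewood--Paley from the decay $|\hat G(\zeta)|\lesssim(1+|\zeta|)^{-2}$, yields $\|Y\|_{\mathcal{C}^\alpha([-k,k]^2)}\leq C\|\chi_{k+1}\xi\|_{\mathcal{C}^{\alpha-2}(\RR^2)}$. Combined with \eqref{eq:BoundZygmundByChunks}, this gives $\|Y\|_{\mathcal{C}^\alpha(\w^{-\delta})}\leq C\sup_k\langle k\rangle^{-\delta}\|\chi_{k+1}\xi\|_{\mathcal{C}^{\alpha-2}}$. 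Applying Markov's inequality to the exponential moment of Lemma~\ref{lem:GrowthNoise} gives $\mathbb{P}(\|\chi_{k+1}\xi\|_{\mathcal{C}^{\alpha-2}}>t)\leq Ck^{\lambda'}e^{-\lambda t^2}$, and a union bound over $k$ leads to
\begin{align*}
	\mathbb{P}\Big(\sup_k\langle k\rangle^{-\delta}\|\chi_{k+1}\xi\|_{\mathcal{C}^{\alpha-2}}>t\Big)\leq C\sum_k k^{\lambda'}e^{-\lambda t^2 \langle k\rangle^{2\delta}},
\end{align*}
whose right-hand side decays faster than any polynomial in $t$; hence $\|Y\|_{\mathcal{C}^\alpha(\w^{-\delta})}\in L^p(\mathbb{P})$ for every $p<\infty$.

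For $\wDY$ the pointwise Hölder characterization does not apply, so I would bound the weighted Besov norm directly via Littlewood--Paley. Writing $\wDYe=|\nabla Y_\varepsilon|^2-c_\varepsilon$ for a mollification $Y_\varepsilon$ and a suitable renormalization constant, each $\wDYe$ sits in the second Wiener chaos and is stationary, so a direct covariance computation (using the logarithmic diagonal singularity of the covariance of $\nabla Y$) yields $\mathbb{E}[|\varDelta_j\wDYe(x)|^2]\leq C\,2^{2j(1-\alpha)}$ uniformly in $x$ and $\varepsilon$ for any $\alpha<1$. Gaussian hypercontractivity upgrades this to any $L^p$-moment; integrating against $\w^{-p\delta}$ with $p$ large enough that $\w^{-\delta}\in L^p$ and summing over $j$ gives
\begin{align*}
	\mathbb{E}\big[\|\wDYe\|^p_{\mathcal{B}^{\alpha'-1}_{p,p}(\w^{-\delta})}\big]\leq C\sum_j 2^{pj(\alpha'-\alpha)}<\infty
\end{align*}
for any $\alpha'<\alpha$. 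Applying the same estimate to the differences $\wDYe-\wDY_{\varepsilon'}$ with a quantitative rate in $\varepsilon\vee\varepsilon'$ shows that $(\wDYe)_\varepsilon$ is Cauchy in $L^p(\mathbb{P};\mathcal{B}^{\alpha'-1}_{p,p}(\w^{-\delta}))$; the limit $\wDY$ lies in $\mathcal{C}^{\alpha''-1}(\w^{-\delta'})$ with all moments finite, by the Besov embedding of Lemma~\ref{lem:BesovProperties}(i), and renaming $\alpha,\delta$ yields the claim. The main obstacle is precisely this covariance/Wick-renormalization computation, essentially identical to \cite{HairerLabbe}; the polynomial weight poses no additional difficulty beyond choosing $p$ sufficiently large, thanks to stationarity.
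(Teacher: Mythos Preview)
The paper does not give its own proof of this lemma; it is simply stated as a result recalled from \cite{HairerLabbe}. Your sketch is correct and is essentially the standard argument from that reference.

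It is worth noting that your treatment of $Y$---localizing via \eqref{eq:BoundZygmundByChunks}, applying a Schauder-type bound against $\chi_{k+1}\xi$, and then using the Gaussian tails of Lemma~\ref{lem:GrowthNoise}---is exactly the mechanism the paper itself deploys a few lines later in the proof of Corollary~\ref{cor:BoundeY} (there for $e^{aY}$ instead of $Y$). Your treatment of $\wDY$ via second-chaos hypercontractivity, stationarity, and weighted $\mathcal{B}^{\alpha'-1}_{p,p}$ norms with $p$ large enough that $\w^{-p\delta}\in L^1$ is the standard route; once the bound is obtained for large $p$, the statement for all $p\in[1,\infty)$ follows from H\"older's inequality. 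No gaps.
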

With the help of Lemma \ref{lem:GrowthNoise} we also deduce a bound on $e^{Y}$.
\begin{corollary}\label{cor:BoundeY}
For any $a\in \RR,\,\alpha\in (0,1),\delta>0$ and $p\in [1,\infty)$ we have 
\begin{align*}
\EE\left[\|e^{aY}\|^p_{\mathcal{C}^{\alpha}(\w^{-\delta})}\right]<\infty\,.
\end{align*}
\end{corollary}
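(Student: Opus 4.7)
The plan is to combine Lemma~\ref{lem:GrowthNoise} with the compact support of $G$ to obtain a Gaussian-type tail bound on the local H\"older norm of $Y$ on each dyadic box $[-k,k]^2$, transfer this to $e^{aY}$ via the mean value theorem, and assemble the chunks through~\eqref{eq:BoundZygmundByChunks}; the $p$-th moment bound for arbitrary small $\delta>0$ is then obtained by raising the moment exponent to absorb the polynomial growth in $k$.

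For the localization, note that $\supp G\subseteq B(0,1)$ implies that $Y(x)=(G\ast\xi)(x)$ depends only on $\xi$ restricted to $B(x,1)$; for $x\in[-k,k]^2$ one has $B(x,1)\subseteq[-k-1,k+1]^2$, so that on this box $Y$ coincides with $G\ast(\chi_{k+1}\xi)$. A Schauder-type estimate for convolution with the truncated Green's function (which can be verified e.g.\ from $\Delta Y=\xi+\varphi\ast\xi$ and standard elliptic regularity, or directly from the explicit form of $G$) then gives
\begin{align*}
\|Y\|_{\mathcal{C}^\alpha([-k,k]^2)}\leq \|G\ast(\chi_{k+1}\xi)\|_{\mathcal{C}^\alpha(\RR^2)}\leq C\,\|\chi_{k+1}\xi\|_{\mathcal{C}^{\alpha-2}(\RR^2)},
\end{align*}
and Lemma~\ref{lem:GrowthNoise} provides $\tilde\lambda,\lambda'>0$ with
\begin{align*}
\EE\bigl[\exp\bigl(\tilde\lambda\,\|Y\|^2_{\mathcal{C}^\alpha([-k,k]^2)}\bigr)\bigr]\leq C\,\langle k\rangle^{\lambda'}.
\end{align*}

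The mean value theorem applied to $z\mapsto e^{az}$ yields
\begin{align*}
\|e^{aY}\|_{\mathcal{C}^\alpha([-k,k]^2)}\leq C\bigl(1+|a|\,\|Y\|_{\mathcal{C}^\alpha([-k,k]^2)}\bigr)\exp\bigl(|a|\,\|Y\|_{L^\infty([-k,k]^2)}\bigr),
\end{align*}
and Young's inequality $q|a|z\leq\tilde\lambda z^2+q^2a^2/(4\tilde\lambda)$ combined with the Gaussian bound above shows that, for every $q\geq 1$,
\begin{align*}
\EE\bigl[\|e^{aY}\|^q_{\mathcal{C}^\alpha([-k,k]^2)}\bigr]\leq C_{a,q}\,\langle k\rangle^{\lambda'}.
\end{align*}
Invoking~\eqref{eq:BoundZygmundByChunks} together with $\sup_k(\cdot)\leq\sum_k(\cdot)$ for non-negative sequences we obtain
\begin{align*}
\EE\bigl[\|e^{aY}\|^q_{\mathcal{C}^\alpha(\w^{-\delta})}\bigr]\leq C\sum_{k\in\NN}\langle k\rangle^{-q\delta}\,\EE\bigl[\|e^{aY}\|^q_{\mathcal{C}^\alpha([-k,k]^2)}\bigr]\leq C'\sum_{k\in\NN}\langle k\rangle^{-q\delta+\lambda'},
\end{align*}
which is finite as soon as $q\delta>\lambda'+1$. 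Choosing $q\geq p$ this large and applying Jensen's inequality $\EE[\|e^{aY}\|^p_{\mathcal{C}^\alpha(\w^{-\delta})}]\leq\EE[\|e^{aY}\|^q_{\mathcal{C}^\alpha(\w^{-\delta})}]^{p/q}$ yields the claim.

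The slightly delicate ingredient is the Schauder estimate for convolution with the truncated Green's function used in the localization step; once it is in place the rest of the argument is mostly bookkeeping, with the one genuinely important observation being that the small weight $\w^{-\delta}$ for arbitrary $\delta>0$ forces us to exploit the freedom to raise the moment exponent from $p$ to a larger $q$ in order to counter the polynomial growth of the chunkwise moments in $k$.
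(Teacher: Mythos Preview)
Your proof is correct and follows essentially the same route as the paper: localize $Y$ on boxes via the compact support of $G$, invoke Lemma~\ref{lem:GrowthNoise} to get a Gaussian tail on the local H\"older norm, pass to $e^{aY}$ by a composition/mean-value estimate, and then take the moment exponent large enough so that the resulting series $\sum_k \langle k\rangle^{-q\delta+\lambda'}$ converges after bounding the supremum in~\eqref{eq:BoundZygmundByChunks} by a sum. The only cosmetic differences are that the paper writes the composition bound directly as $\|e^{aY}\|_{\mathcal{C}^\alpha([-k,k]^2)}\leq \exp(C|a|\|Y\|_{\mathcal{C}^\alpha([-k,k]^2)})$ and uses $\chi_{k+2}$ rather than $\chi_{k+1}$, but the logic is identical.
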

\begin{proof}
Note first that we can bound 
\begin{align*}
	\|e^{aY}\|_{\mathcal{C}^{\alpha}(\w^\delta)}\overset{\eqref{eq:BoundZygmundByChunks}}{\leq} C \sup_{k\in \mathbb{N}} \frac{\|e^{aY}\|_{\mathcal{C}^{\alpha}([-k,k]^2)}}{k^\delta} \leq C \sup_{k\in \mathbb{N}} \frac{\exp(C|a|\|Y\|_{\mathcal{C}^{\alpha}([-k,k]^2)})}{k^\delta}\,,
\end{align*}
so that it remains to bound the $p$-th moment of the right hand side. 
Using the compact support of the Green's function we see 
\begin{align*}
\|Y\|_{\mathcal{C}^{\alpha}([-k,k]^2)}\leq C \|\chi_{k+2}\xi\|_{\mathcal{C}^{\alpha-2}(\RR^2))}
\end{align*}
(to show this one can for example use the wavelet characterization of Besov spaces, as in \cite{TriebelIII}, and a decomposition of $G$ as in \cite[Remark 5.6]{RegularityStructures}). With Lemma \ref{lem:GrowthNoise} we can therefore find $\lambda,\,\lambda'>0$ such that
\begin{align*}
	\sup_{k\in \mathbb{N}} \frac{\EE\left[ \exp(\lambda \|Y\|^2_{\mathcal{C}^{\alpha}([-k,k]^2)}) \right] }{k^{\lambda'}}<\infty\,.
\end{align*}
We now pick, without loss of generality, $p\in [1,\infty)$ so big that $p \cdot \delta \geq 2+\lambda'$ which gives us
\begin{align*}
\EE\left[\left|\sup_{k\in \mathbb{N}} \frac{\exp(C|a|\|Y\|_{\mathcal{C}^{\alpha}([-k,k]^2)})}{k^\delta}\right|^p\right] &\leq \sum_{k=1}^\infty \frac{\EE\left[\exp(pC|a|\|Y\|_{\mathcal{C}^\alpha([-k,k]^2)}\right]}{k^{\delta p}} \\
&\leq C \sum_{k=1}^\infty \frac{\EE\left[\exp(\lambda\|Y\|^2_{\mathcal{C}^\alpha([-k,k]^2)}\right]}{k^{2}\cdot k^{\lambda'}}<\infty\,.
\end{align*}
\end{proof}
By similar arguments as in Corollary \ref{cor:BoundeY} a bound for $\varphi\ast \xi$ follows 
\begin{align*}
\EE\left[\|\varphi\ast \xi \|_{\mathcal{C}^{\beta}(\w^{-\delta})}^p \right]<\infty
\end{align*}
for any $\beta\in \RR$, $\delta>0$ and $p\in [1,\infty)$.
We will mostly work with smoothened noise, so fix a mollifier $\rho\in C^\infty_c(B(0,1)$ and define for $\rho_\varepsilon:=\varepsilon^{-2}\,\rho(\varepsilon^{-1}\cdot)\,\varepsilon\in (0,1]$ 
\begin{align*}
\xi_\varepsilon=\rho_\varepsilon\ast \xi,\, Y_\varepsilon=G\ast \xi_\varepsilon\,.
\end{align*}
and we have once more (with the same $\varphi\in C^\infty_c(\RR^2)$ as above)
\begin{align*}
\Delta Y_\varepsilon=\xi_\varepsilon+\varphi \ast\xi_\varepsilon \,.
\end{align*}
Using that $\supp \rho_\varepsilon \subseteq B(0,1)$ we can redo all the proofs above and obtain that for any $\delta>0,\alpha\in (0,1),\,\beta\in \RR,\,a\in \RR$ and $p\in [1,\infty)$ we have the following bound uniform in $\varepsilon$
\begin{align}
\label{eq:BoundSmoothenedNoise}
\EE\left[\|Y_\varepsilon\|^p_{\mathcal{C}^\alpha(\w^{-\delta})}+\|\wDYe\|^p_{\mathcal{C}^{\alpha-1}(\w^{-\delta})}+\|e^{aY_\varepsilon}\|^p_{\mathcal{C}^\alpha(\w^{-\delta})}+\|\varphi\ast \xi_\varepsilon\|^p_{\mathcal{C}^{\beta}(\w^{-\delta})}\right]\le C\,.
\end{align}
Let us also recall the following statements, again from \cite{HairerLabbe}.
\begin{lemma}
For $\alpha\in (0,1)$, $\kappa\in (0,1-\alpha)$, $p\in [1,\infty)$ we have 
\begin{align*}
\mathbb{E}\left[\|Y_\varepsilon-Y\|^p_{\mathcal{C}^{\alpha}(\w^{-\delta})}+\|\wDYe-\wDY\|^p_{\mathcal{C}^{\alpha-1}(\w^{-\delta})}\right]\leq C \varepsilon^{p\kappa}\,.
\end{align*}
\end{lemma}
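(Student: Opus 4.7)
The plan is to follow essentially the same blueprint as Corollary \ref{cor:BoundeY}: reduce the weighted moment bound to a uniform-in-$k$ polynomial-growth bound on the chunks $[-k,k]^2$ via \eqref{eq:BoundZygmundByChunks}, and for each chunk prove the rate $\varepsilon^{p\kappa}$ by the usual Wiener chaos argument. Concretely, I would first show bounds of the form
\begin{align*}
\mathbb{E}\Bigl[\|Y-Y_\varepsilon\|^p_{\mathcal{C}^{\alpha}([-k,k]^2)}+\|\wDY-\wDYe\|^p_{\mathcal{C}^{\alpha-1}([-k,k]^2)}\Bigr]\leq C\,k^{\lambda'}\,\varepsilon^{p\kappa},
\end{align*}
after which a $p$-th power of \eqref{eq:BoundZygmundByChunks} combined with $\ell^1$-summability (by choosing $p$ large enough so that $p\delta\geq 2+\lambda'$, exactly as in the proof of Corollary \ref{cor:BoundeY}) delivers the result.

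For the local estimate on $Y-Y_\varepsilon$, I would work in the first Wiener chaos. Using the truncation $\chi_{k+2}$ together with the compact support of $G$, it is enough to control $\chi_{k+2}(Y-Y_\varepsilon)$. On Fourier side the multiplier is $\hat G(k)(1-\hat\rho(\varepsilon k))$, and $|1-\hat\rho(\varepsilon \eta)|\le C(\varepsilon|\eta|)^{\kappa+\alpha'}\wedge 1$ for any small $\alpha'>0$, which produces an extra factor $\varepsilon^\kappa$ at the price of losing $\kappa$ derivatives. Computing $\mathbb{E}\|\varDelta_j\chi_{k+2}(Y-Y_\varepsilon)\|^2_{L^p}$ in a standard way (or rather evaluating pointwise variances and using the embedding $\mathcal{B}^{\alpha+\kappa'}_{2,2}\hookrightarrow\mathcal{C}^\alpha$ of Lemma \ref{lem:BesovProperties} (ii)) yields a bound of order $k^C\varepsilon^{2\kappa}2^{-2j(\alpha+\kappa')}$ for some $\kappa'\in(\kappa,1-\alpha)$. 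Nelson's hypercontractivity then upgrades this $L^2$ control to arbitrary $L^p$ moments, giving the $\mathcal{C}^\alpha([-k,k]^2)$ estimate with rate $\varepsilon^{p\kappa}$.

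For the Wick-square term the reasoning is parallel but takes place in the inhomogeneous second Wiener chaos. Writing
\begin{align*}
\wDYe-\wDY=\Wick{\D(Y_\varepsilon-Y)\,\D Y_\varepsilon}+\Wick{\D Y\,\D(Y_\varepsilon-Y)},
\end{align*}
each summand is a bilinear object in the noise and can be treated by an $L^2$-computation of its covariance kernel; the same multiplier argument produces the $\varepsilon^\kappa$-gain, while regularity is measured at order $\alpha-1$ to absorb the two derivatives. Again hypercontractivity in the second chaos (applied after localization by $\chi_{k+2}$) promotes the $L^2$ estimate to $L^p$, with at most a polynomial-in-$k$ prefactor.

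The main obstacle, as in Corollary \ref{cor:BoundeY}, is making sure that the localization procedure does not destroy the Gaussian nature of the object: one has to truncate by $\chi_{k+2}$ \emph{before} invoking hypercontractivity so that the resulting random variable still lives in a finite sum of fixed Wiener chaoses and one has to verify that the constants arising from the cut-off grow only polynomially in $k$. Everything else (the Fourier estimate on $1-\hat\rho(\varepsilon\cdot)$, the Besov-to-Hölder embedding, and the final summation in $k$) is routine, and we omit the computations since they mirror those already carried out in Corollary \ref{cor:BoundeY} and in \cite{HairerLabbe}.
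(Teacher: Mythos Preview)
The paper does not actually prove this lemma: it is introduced with ``Let us also recall the following statements, again from \cite{HairerLabbe}'' and no proof is given. Your sketch is a correct and standard route to the result --- localize via \eqref{eq:BoundZygmundByChunks}, compute second moments of Littlewood--Paley blocks using the multiplier bound $|1-\hat\rho(\varepsilon\eta)|\lesssim (\varepsilon|\eta|)^{\kappa}\wedge 1$, upgrade to $L^p$ by hypercontractivity in the first (respectively second) chaos, and sum over $k$ --- and this is essentially what is done in \cite{HairerLabbe}. One small remark: your bilinear identity for the Wick-square difference is correct once interpreted at the level of kernels (the Wick product $\Wick{XZ}=XZ-\mathbb{E}[XZ]$ for jointly Gaussian $X,Z$ makes the algebra $a^2-b^2=(a-b)a+b(a-b)$ go through with the counterterms matching), but since $\D Y$ is only a distribution this decomposition should be read as a statement about the covariance kernels rather than a pointwise identity.
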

Together with the bounds \eqref{eq:BoundSmoothenedNoise} and Corollary \ref{cor:BoundeY} we then obtain 
\begin{align}
\label{eq:ConvergenceeY}
\EE\left[\|e^{aY}-e^{aY_\varepsilon}\|^p_{\mathcal{C}^{\alpha}(\w^\delta)}\right]\leq C \varepsilon^{p\kappa }
\end{align}
for $a\in \RR,\,\alpha\in (0,1)$ and $\kappa\in (0,1-\alpha)$. Further we have for $\beta\in\RR,\,\alpha\in (0,1)$ $\delta>0$, $p\in [1,\infty)$ and $\kappa\in (0,1-\alpha)$
\begin{align}
\label{eq:ConvergenceSmoothTerm}
\EE\left[\|\varphi\ast \xi_\varepsilon-\varphi\ast \xi\|^p_{\mathcal{C}^{\beta}(\w^{-\delta})}\right]\leq C \EE\left[\|\xi-\xi_\varepsilon\|_{\mathcal{C}^{\alpha}(\w^{-\delta})}\right]\leq C \varepsilon^{\kappa}
\end{align}
where we used in the last step $\|\xi-\xi_\varepsilon\|_{\mathcal{C}^{\alpha}(\w^{-\delta})}\leq C\varepsilon^\kappa$ due to \cite[Lemma 1.1]{HairerLabbe}.
It will turn out convenient to have an  estimate on the blow-up of the $L^p$ norm of $\D Y$, which is covered by the following Lemma. 
\begin{lemma}\label{lem:NoiseLp}
 Given $\delta\in (0,1)$, $p\in (2/\delta,\infty)$ and $q\in [p,\infty)$ we have
\begin{align*}
\mathbb{E}\left[ \|\D Y_\varepsilon\|_{L^p(\w^{-\delta})}^q+\|\wDYe\|_{L^p(\w^{-\delta})}^q\right]\leq C |\log(\varepsilon)|^q
\end{align*}
\end{lemma}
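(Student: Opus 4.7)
The idea is to estimate the $L^p(\w^{-\delta})$ norms by first bounding moments of $\D Y_\varepsilon(x)$ and $\wDYe(x)$ \emph{pointwise} in $x$ via Gaussian (respectively second chaos) hypercontractivity, and then integrating against $\w^{-\delta p}$. The weight integrability uses exactly the assumption $p\delta > 2$.

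First, write $Y_\varepsilon = G_\varepsilon \ast \xi$ with $G_\varepsilon := G \ast \rho_\varepsilon$. Since $\xi$ is white noise, $\D Y_\varepsilon(x) = (\D G_\varepsilon)\ast \xi\,(x)$ is a centered Gaussian with the stationary covariance
\begin{align*}
\EE\big[|\D Y_\varepsilon(x)|^2\big] = \|\D G_\varepsilon\|_{L^2(\RR^2)}^2.
\end{align*}
Because $G(y) = \frac{1}{2\pi}\log|y|$ near the origin and $G$ is compactly supported, one has the pointwise estimate $|\D G(y)| \lesssim |y|^{-1}\1_{|y|\le 1}$, and a direct splitting of the integral according to $|y| \lesssim \varepsilon$ or not gives $\|\D G_\varepsilon\|_{L^2(\RR^2)}^2 \lesssim |\log\varepsilon|$ uniformly in $\varepsilon \in (0,1]$. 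By hypercontractivity on the first Wiener chaos we then obtain, for any $p \in [1,\infty)$,
\begin{align*}
\EE\big[|\D Y_\varepsilon(x)|^p\big]^{1/p} \;\lesssim_p\; \EE\big[|\D Y_\varepsilon(x)|^2\big]^{1/2} \;\lesssim\; |\log\varepsilon|^{1/2}.
\end{align*}

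Second, by Fubini,
\begin{align*}
\EE\Big[\|\D Y_\varepsilon\|_{L^p(\w^{-\delta})}^p\Big] = \int_{\RR^2} \w^{-\delta p} \, \EE\big[|\D Y_\varepsilon(x)|^p\big]\,\dd x \;\lesssim\; |\log\varepsilon|^{p/2}\int_{\RR^2} \w^{-\delta p}\,\dd x,
\end{align*}
and the last integral is finite by the assumption $p\delta > 2$. This already gives the desired bound (even with exponent $p/2$ instead of $p$) in $L^p(\mathbb{P})$. The same strategy works for the Wick square: $\wDYe(x)$ lives in the second Wiener chaos, so by Wick's isometry
\begin{align*}
\EE\big[|\wDYe(x)|^2\big] \;\lesssim\; \big(\EE[|\D Y_\varepsilon(x)|^2]\big)^2 \;\lesssim\; |\log\varepsilon|^2,
\end{align*}
and second chaos hypercontractivity upgrades this to $\EE[|\wDYe(x)|^p]^{1/p} \lesssim_p |\log\varepsilon|$. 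Integrating against $\w^{-\delta p}$ as above yields $\EE[\|\wDYe\|_{L^p(\w^{-\delta})}^p] \lesssim |\log\varepsilon|^p$.

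Third, to pass from the $p$-th to the $q$-th moment for $q \ge p$, I would use that $\D Y_\varepsilon$ and $\wDYe$ are $L^p(\w^{-\delta})$-valued random variables living in a fixed Wiener chaos; the Banach-space version of Gaussian hypercontractivity (e.g.\ Kahane's inequality) gives
\begin{align*}
\EE\Big[\|\,\cdot\,\|_{L^p(\w^{-\delta})}^q\Big]^{1/q} \;\lesssim_{p,q}\; \EE\Big[\|\,\cdot\,\|_{L^p(\w^{-\delta})}^p\Big]^{1/p},
\end{align*}
and the conclusion follows. The only genuinely computational ingredient is the logarithmic bound on $\|\D G_\varepsilon\|_{L^2}^2$; the rest is a routine combination of chaos hypercontractivity with weight integrability, so I do not expect a real obstacle. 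If a sharper separation between the two norms were desired, the first could be stated with exponent $q/2$ instead of $q$, but the common bound $|\log\varepsilon|^q$ is convenient for later use.
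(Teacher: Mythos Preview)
Your proposal is correct and follows essentially the same strategy as the paper: bound the pointwise variance $\EE[|\D Y_\varepsilon(x)|^2]\lesssim |\log\varepsilon|$ (and $\EE[|\wDYe(x)|^2]\lesssim |\log\varepsilon|^2$), invoke equivalence of moments on a fixed Wiener chaos, and integrate against the weight using $\int \w^{-\delta p}\,\dd x<\infty$. The only difference is in the order of the steps for passing from $p$ to $q$: you first compute $\EE[\|\cdot\|_{L^p(\w^{-\delta})}^p]$ by Fubini and then invoke Banach-space hypercontractivity, whereas the paper first applies Jensen's inequality (using $q\ge p$ and that $\w^{-\delta p}\,\dd x$ is, up to normalisation, a probability measure) to obtain directly
\[
\EE\big[\|f\|_{L^p(\w^{-\delta})}^q\big]\le C\int_{\RR^2}\w^{-\delta p}\,\EE[|f(x)|^q]\,\dd x,
\]
and only then uses scalar hypercontractivity pointwise. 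The paper's route is slightly more elementary since it avoids the Banach-valued chaos inequality, but both arguments are equally short and yield the same bound.
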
 
\begin{proof}
We estimate via Jensen's inequality
\begin{align*}
\mathbb{E}\left[ \|\D Y_\varepsilon\|_{L^p(\w^{-\delta})}^q+\|\wDYe\|_{L^p(\w^{-\delta})}^q\right]\leq C\int_{\RR^2} \frac{1}{\w^{p\delta}}\,\mathbb{E}[|\D Y(x)|^q+|\wDYe(x)|^q]\, \dd x\,.
\end{align*}
The result follows now from equivalence of moments and $\mathbb{E}[|\D Y_\varepsilon(x)|^2]$, $\EE[\wDYe(x)]\leq C |\log \varepsilon|^2$.
\end{proof}
\subsection{Set-up and conserved quantities}
\label{subsec:SetUp}
We want to consider the equation 
\begin{align}
\label{eq:NotTransformedEquation}
\ii \partial_t u=\Delta u+u \xi +\lambda |u|^{2\sigma} u,\,u(0)=u_0\,,
\end{align}
with a suitable renormalization we introduce below. It is well-known (see for example \cite{Cazenave}) that a solution to this equation, if existent, has at least formally the conserved quantities mass $N(u(t))=N(u_0)$ and energy $H(u(t))=H(u_0)$ defined as
\begin{align*}
N(u)=\int_{\RR^2} |u|^2 \,\dd x,\,
H(u)=\int_{\RR^2} \frac{1}{2} |\D u|^2 -\frac{1}{2} |u|^2 \xi -\frac{\lambda}{2\sigma+2} |u|^{2\sigma+2} \,\dd x\,.
\end{align*}
We follow \cite{SchroedingerTorus} in the idea to substitute $u$ in this equation by $v=e^{Y}u$ with $Y$ as introduced above and obtain the problem
\begin{align*} 
\ii \partial_t v=\Delta v+v\,(\D Y^2 -\varphi\ast\xi)-2\D v \D Y+|v|^{2\sigma} v e^{-2\sigma Y} ,\, v(0)=v_0:=e^{-Y} u_0\,.
\end{align*}
As explained in \cite{SchroedingerTorus} there is only hope to obtain a solution to this equation if we replace the square $\D Y^2$ by a different expression for which we take the Wick product $\wDY$, which corresponds to the renormalization announced for \eqref{eq:NotTransformedEquation} above.
\begin{align} 
\label{eq:TransformedEquation}
\ii \partial_t v=\Delta v+v\,(\wDY-\varphi\ast\xi)-2\D v \D Y+\lambda |v|^{2\sigma} v e^{-2\sigma Y},\,v(0)=v_0\,.
\end{align}
We assume for simplicity, as in \cite{SchroedingerTorus}, that the initial condition is ``controlled by $Y$'' in the sense $v_0=u_0 e^{Y}\in H^2(\w^{\delta_0})$ for some $\delta_0\in (0,\frac{1}{2})$. $v$ has then, at least formally, the conserved quantities 
\begin{align}
\label{eq:TransformedQuantities}
\tilde{N}(v)=\int_{\RR^2}  |v|^2 e^{-2Y}\, \dd x ,\,\tilde{H}(v)=\int_{\RR^2 } \left(\frac{1}{2} |\D v|^2 -\frac{1}{2}|v|^2 \tDY  -\frac{\lambda}{2\sigma+2}|v|^{2\sigma+2}\right) e^{-2Y}\,\dd x\,,
\end{align}
where we introduced the notation $\tDY=\wDY-\varphi\ast \xi$.
Our aim is to solve \eqref{eq:TransformedEquation} by approximation via a smoothened equation
\begin{align}
\label{eq:MollifiedEquation}
\ii \partial_t v_\varepsilon=\Delta v_\varepsilon+v_\varepsilon\tDYe-2\D v_\varepsilon \D Y_\varepsilon+\lambda |v_\varepsilon e^{-Y_\varepsilon}|^{2\sigma } v_\varepsilon ,\,v_\varepsilon(0)=v_0
\end{align}
where $Y_\varepsilon$ is defined as above via the mollification $\xi_\varepsilon=\rho_\varepsilon\ast \xi$ and with $\tDYe=\wDYe-\varphi\ast \xi_\varepsilon$. Equation \eqref{eq:MollifiedEquation} has a (unique) solution for any $T>0$ in $C([0,T];H^{2}(\w^{-\delta}))\cap C([0,T];H^{\gamma}(\w^{\delta'}))$ for any $\delta>0$, $\gamma\in (1,2)$ and $\delta'<(1-\frac{\gamma}{2})\delta_0$, see \cite[Section 3.6]{Cazenave}\footnote{The growth result is not contained in \cite{Cazenave} but follows from the same arguments than below if one first cuts-off the potential $\xi_\varepsilon$, then derives bounds independent of the truncation and finally removes the latter.}.

 \eqref{eq:MollifiedEquation} has the conserved quantities
\begin{align*}
%\label{eq:MollifiedQuantities}
&\tilde{N}(v_\varepsilon)=\int_{\RR^2}|v_\varepsilon|^2 e^{-2Y_\varepsilon} \, \dd x \\ &\tilde{H}_\varepsilon(v_\varepsilon)=\int_{\RR^2} \left(\frac{1}{2}|\D v_\varepsilon|^2-\frac{1}{2}|v_\varepsilon|^2  \tDYe-\frac{\lambda}{2\sigma+2} |v_\varepsilon |^{2\sigma+2} e^{-2\sigma Y_\varepsilon}   \right) e^{-2Y_\varepsilon}\,\dd x\,.
\end{align*}
which are well-defined due to $v_\varepsilon\in C([0,T];H^{\gamma}(\w^{\delta}))$ for $\gamma\in(1,2)$ and $\delta<(1-\frac{\gamma}{2})\delta_0$.

\section{Moments and a priori bound in $H^1$}
\label{sec:H1}
We start by a small lemma that allows us to controll moments of $v_\varepsilon$ by its derivatives. 
\begin{lemma}\label{lem:H1GivesMoments}
Let $v_\varepsilon$ be the unique solution to \eqref{eq:MollifiedEquation} on $[0,T]$. We then have for $\delta\in (0,\delta_0)$ and $\delta'<1-2\delta$
\begin{align*}
\sup_{t\in [0,T]} \int_{\RR^2} |\w^{\delta}v_\varepsilon|^2\,\dd x\leq K_\varepsilon \int_{\RR^2}  |\w^{\delta_0}v_0|^2 \,\dd x+T K_\varepsilon \sqrt{N(u_0)} \,\|\D v \|_{C([0,T];L^2(\w^{-\delta'}))}\,.
\end{align*}
\end{lemma}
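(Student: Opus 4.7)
The plan is to test \eqref{eq:MollifiedEquation} against $\bar v_\varepsilon e^{-2Y_\varepsilon}\w^{2\delta_*}$ with an intermediate exponent $\delta<\delta_*<\delta_0$ chosen to additionally satisfy $\delta_*<(1-\delta')/2$; such a $\delta_*$ exists because $\delta<\delta_0$ and $\delta'<1-2\delta$. The role of the exponential weight $e^{-2Y_\varepsilon}$ is the one that renders the mass $\tilde N_\varepsilon$ conserved: it is designed to kill the $\D Y_\varepsilon$-contributions in the time derivative, which we cannot hope to absorb into a $K_\varepsilon$ because of the logarithmic blow-up in Lemma \ref{lem:NoiseLp}. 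Multiplying \eqref{eq:MollifiedEquation} by $-\ii\bar v_\varepsilon$ and taking real parts (the real-valued terms $\tDYe|v_\varepsilon|^2$ and $\lambda|v_\varepsilon|^{2\sigma+2}e^{-2\sigma Y_\varepsilon}$ drop out) one derives the pointwise Schr\"odinger current identity $\partial_t|v_\varepsilon|^2=2\,\D\!\cdot\! J_\varepsilon-4\,J_\varepsilon\cdot\D Y_\varepsilon$ with $J_\varepsilon:=\I{\bar v_\varepsilon\D v_\varepsilon}$.

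Integrating this identity against $e^{-2Y_\varepsilon}\w^{2\delta_*}$ and performing integration by parts on the divergence term produces
\begin{align*}
\frac{\dd}{\dd t}\int e^{-2Y_\varepsilon}\w^{2\delta_*}|v_\varepsilon|^2\,\dd x &= -4\delta_*\int e^{-2Y_\varepsilon}\w^{2\delta_*-2}x\cdot J_\varepsilon\,\dd x\\
&\quad+ 4\int e^{-2Y_\varepsilon}\w^{2\delta_*}\D Y_\varepsilon\cdot J_\varepsilon\,\dd x-4\int e^{-2Y_\varepsilon}\w^{2\delta_*}J_\varepsilon\cdot \D Y_\varepsilon\,\dd x\,,
\end{align*}
where the last two terms cancel exactly. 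Bounding $|x|\leq \w$ and splitting $e^{-2Y_\varepsilon}=(e^{-Y_\varepsilon})^2$, Cauchy--Schwarz then yields
\begin{align*}
\Big|\frac{\dd}{\dd t}\int e^{-2Y_\varepsilon}\w^{2\delta_*}|v_\varepsilon|^2\,\dd x\Big|\leq 4\delta_*\sqrt{\tilde N_\varepsilon(v_\varepsilon)}\,\|e^{-2Y_\varepsilon}\w^{4\delta_*-2+2\delta'}\|_{L^\infty}^{1/2}\|\D v_\varepsilon\|_{L^2(\w^{-\delta'})}\,.
\end{align*}
The $L^\infty$-norm is bounded by some $K_\varepsilon$ thanks to Corollary \ref{cor:BoundeY}, using the constraint $4\delta_*-2+2\delta'<0$ built into $\delta_*<(1-\delta')/2$, and $\tilde N_\varepsilon(v_\varepsilon)=\tilde N_\varepsilon(v_0)$ is further dominated by $K_\varepsilon N(u_0)$ via the same corollary.

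Integrating in $t\in[0,T]$, taking the supremum, and transferring back from the weight $e^{-2Y_\varepsilon}\w^{2\delta_*}$ to $\w^{2\delta}$ on the left (using $e^{2Y_\varepsilon}\w^{2\delta-2\delta_*}\leq K_\varepsilon$, which needs $\delta_*>\delta$) and to $\w^{2\delta_0}|v_0|^2$ on the initial-data side (using $e^{-2Y_\varepsilon}\w^{2\delta_*-2\delta_0}\leq K_\varepsilon$, which needs $\delta_*<\delta_0$) then stitches these estimates into the desired inequality. The crux of the argument is the cancellation in the second step: the weight $e^{-2Y_\varepsilon}$ converts the analytically intractable $\D Y_\varepsilon$-term---for which no $\varepsilon$-uniform pointwise or $L^\infty$ bound is available---into an algebraic identity, so that only polynomial weights remain, and these are controlled through Corollary \ref{cor:BoundeY}.
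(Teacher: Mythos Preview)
Your proof is correct and follows essentially the same route as the paper: differentiate the weighted mass $\int |v_\varepsilon|^2 e^{-2Y_\varepsilon}\w^{2\delta_*}\,\dd x$, exploit the cancellation of the $\D Y_\varepsilon$-terms produced by the weight $e^{-2Y_\varepsilon}$ after integration by parts, and bound the surviving $\nabla\w^{2\delta_*}$-term via Cauchy--Schwarz together with mass conservation and Corollary~\ref{cor:BoundeY}. The only cosmetic differences are that the paper computes $\frac{\dd}{\dd t}\int \w^{2\delta}|v_\varepsilon|^2 e^{-2Y_\varepsilon}$ directly from $2\,\R{\int \w^{2\delta}\partial_t v_\varepsilon\,\bar v_\varepsilon e^{-2Y_\varepsilon}}$ rather than via the current $J_\varepsilon$, and that you insert an intermediate exponent $\delta_*\in(\delta,\delta_0)$; the latter is in fact a useful precision, since passing from $\int|v_\varepsilon|^2 e^{-2Y_\varepsilon}\w^{2\delta_*}$ back to $\int|v_\varepsilon|^2\w^{2\delta}$ requires $\|e^{2Y_\varepsilon}\w^{2(\delta-\delta_*)}\|_{L^\infty}\leq K_\varepsilon$, which Corollary~\ref{cor:BoundeY} only gives for $\delta_*>\delta$.
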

\begin{proof}
We have
\begin{align*}
&\frac{\dd}{\dd t}\int_{\RR^2}   |\w^{\delta} \,v_\varepsilon|^2 e^{-2Y_\varepsilon}\,\dd x=2\,\R{\int_{\RR^2} \w^{2\delta} \,  \partial_t v_\varepsilon \cdot\overline{v}_\varepsilon\, e^{-2Y_\varepsilon} \,\dd x}\\& =2\,\I{\int_{\RR^2} \w^{2\delta}(\Delta v_\varepsilon -2 \D v_\varepsilon \D Y_\varepsilon)\overline{v}_\varepsilon\,e^{-2Y_\varepsilon} \,\dd x} =2\,\I{\int_{\RR^2} \D\w^{2\delta}\cdot \D v_\varepsilon\,\overline{v}_\varepsilon e^{-2Y_\varepsilon}\,\dd x} \\
&\leq C \,\int_{\RR^2} \w^{2\delta-1} |\D v_\varepsilon| \,|v_\varepsilon| \,e^{-2Y_\varepsilon}\,\dd x\,.
\end{align*}
Integrating over $[0,T]$ then shows
\begin{align*}
\sup_{t\in [0,T]} \int_{\RR^2} |\w^{\delta}v_\varepsilon|^2 e^{-2Y_\varepsilon}\,\dd x\leq  \int_{\RR^2}  |\w^{\delta}v_0|^2 e^{-2Y_\varepsilon}+C T \sup_{t\in [0,T]} \int_{\RR^2} \w^{2\delta-1} |\D v_\varepsilon| \,|v_\varepsilon| \,e^{-2Y_\varepsilon} \,\dd x\,,
\end{align*}
so that the desired estimate follows with the Cauchy-Schwarz inequality and Corollary \ref{cor:BoundeY}. 
\end{proof}
We now derive an $H^1$ bound for $v_\varepsilon$. This is essentially based on an energy estimate, similar as done in \cite{SchroedingerTorus}, but using Lemma \ref{lem:H1GivesMoments} to controll appearing moments.
\begin{proposition}\label{prop:H1Bound}
Let $v_\varepsilon$ be the unique solution of \eqref{eq:MollifiedEquation} with $\lambda\leq 0$ or $\sigma<1$ on $[0,T]$, we then have for $\delta>0$ 
\begin{align*}
\|v_0\|_{H^1(\w^{-\delta})}\leq K_\varepsilon(1+\|v_0\|_{H^1(\w^{\delta_0})}^a)\,,
\end{align*}
for some deterministic $a>0$. 
\end{proposition}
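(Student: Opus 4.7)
The strategy is to run a weighted energy estimate based on conservation of $\tilde{H}_\varepsilon(v_\varepsilon(t))=\tilde{H}_\varepsilon(v_0)$. Rearranging the definition of $\tilde H_\varepsilon$ in \eqref{eq:TransformedQuantities} gives
\begin{align*}
\tfrac{1}{2}\int_{\RR^2}|\D v_\varepsilon|^2 e^{-2Y_\varepsilon}\,\dd x=\tilde{H}_\varepsilon(v_0)+\tfrac{1}{2}\int_{\RR^2}|v_\varepsilon|^2\tDYe\, e^{-2Y_\varepsilon}\,\dd x+\tfrac{\lambda}{2\sigma+2}\int_{\RR^2}|v_\varepsilon|^{2\sigma+2}e^{-(2\sigma+2)Y_\varepsilon}\,\dd x.
\end{align*}
Corollary~\ref{cor:BoundeY} applied with $a=2$ yields the pointwise lower bound $e^{-2Y_\varepsilon}\geq K_\varepsilon^{-1}\w^{-2\delta}$ for any small $\delta>0$, so the left-hand side dominates $K_\varepsilon^{-1}\|\D v_\varepsilon\|_{L^2(\w^{-\delta})}^2$. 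The plan is then to bound each right-hand side term by either a $K_\varepsilon$-multiple of a polynomial in $\|v_0\|_{H^1(\w^{\delta_0})}$, or by a \emph{strictly sub-quadratic} function of $\|\D v_\varepsilon\|_{L^2(\w^{-\delta})}$, so that Young's inequality absorbs the remainder into the left-hand side. A final appeal to Lemma~\ref{lem:H1GivesMoments} will then upgrade the gradient control to the full weighted $H^1$ norm.

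The initial energy $\tilde H_\varepsilon(v_0)$ is bounded directly using Lemma~\ref{lem:BesovProperties}, the bounds \eqref{eq:BoundSmoothenedNoise}, and Corollary~\ref{cor:BoundeY}. For the rough term we invoke the weighted duality (Lemma~\ref{lem:BesovProperties}(iii)) with some $\alpha<1$ close to one and a small $\delta'>0$,
\begin{align*}
\left|\int_{\RR^2}|v_\varepsilon|^2\tDYe\, e^{-2Y_\varepsilon}\,\dd x\right|\leq \|\tDYe\|_{\mathcal{C}^{\alpha-1}(\w^{-\delta'})}\bigl\||v_\varepsilon|^2 e^{-2Y_\varepsilon}\bigr\|_{\mathcal{B}^{1-\alpha}_{1,1}(\w^{\delta'})}.
\end{align*}
The first factor is $K_\varepsilon$ by \eqref{eq:BoundSmoothenedNoise}; the second, via Besov multiplication (Lemma~\ref{lem:BesovProperties}(iv)) together with the weighted H\"older control of $e^{-2Y_\varepsilon}$ from Corollary~\ref{cor:BoundeY}, is bounded by $K_\varepsilon\|v_\varepsilon\|_{\mathcal{B}^{1-\alpha+\kappa}_{2,2}(\w^{\mu})}^2$ for small $\kappa,\mu>0$. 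Interpolation between $L^2(\w^{\mu'})$ and $H^1(\w^{-\delta})$ via Lemma~\ref{lem:BesovInterpolation} then produces a factor $\|\D v_\varepsilon\|_{L^2(\w^{-\delta})}^{2\theta}$ with $\theta=1-\alpha+\kappa<1$, multiplied by $\|v_\varepsilon\|_{L^2(\w^{\mu'})}^{2(1-\theta)}$; this last quantity is, by Lemma~\ref{lem:H1GivesMoments}, \emph{sub-linear} in $\|\D v_\varepsilon\|_{L^2(\w^{-\delta})}$, so the total exponent of $\|\D v_\varepsilon\|_{L^2(\w^{-\delta})}$ stays strictly below $2$.

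The nonlinear term splits by sign. For $\lambda\leq 0$ it contributes with the correct sign on the left of the energy identity and is simply discarded after rearrangement. For $\lambda>0$ with $\sigma<1$, set $\tilde v_\varepsilon:=v_\varepsilon e^{-Y_\varepsilon}$, so the term equals $\tfrac{\lambda}{2\sigma+2}\|\tilde v_\varepsilon\|_{L^{2\sigma+2}}^{2\sigma+2}$, and the two-dimensional Gagliardo--Nirenberg inequality gives
\begin{align*}
\|\tilde v_\varepsilon\|_{L^{2\sigma+2}}^{2\sigma+2}\leq C\,\|\tilde v_\varepsilon\|_{L^2}^{2}\,\|\D\tilde v_\varepsilon\|_{L^2}^{2\sigma},
\end{align*}
which is strictly sub-quadratic in $\|\D\tilde v_\varepsilon\|_{L^2}$ precisely because $\sigma<1$. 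The product rule expansion of $\D\tilde v_\varepsilon$ returns our energy plus a residual term $\|v_\varepsilon\D Y_\varepsilon e^{-Y_\varepsilon}\|_{L^2}$ controlled via H\"older, Lemma~\ref{lem:NoiseLp}, and the weighted moment bound of Lemma~\ref{lem:H1GivesMoments}, all at the cost of additional $K_\varepsilon$ factors.

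Combining these estimates, every term on the right of the energy identity is dominated by $K_\varepsilon(1+\|v_0\|_{H^1(\w^{\delta_0})}^a)$ plus a strictly sub-quadratic function of $\|\D v_\varepsilon\|_{L^2(\w^{-\delta})}$; Young's inequality absorbs the latter into the left-hand side, and a second application of Lemma~\ref{lem:H1GivesMoments} converts the gradient bound into the full $\|v_\varepsilon\|_{H^1(\w^{-\delta})}$ bound. The principal technical hurdle we expect is the bookkeeping of weights: the duality, multiplication, and interpolation steps each introduce polynomial weights that must be chosen consistently so that every lemma applies, the exponential factor $e^{-2Y_\varepsilon}$ can be traded for a polynomial one via Corollary~\ref{cor:BoundeY}, and the final exponent of $\|\D v_\varepsilon\|_{L^2(\w^{-\delta})}$ stays strictly below $2$.
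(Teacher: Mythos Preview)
Your overall strategy mirrors the paper's proof: rearrange the conserved energy, bound the rough $\tDYe$-term by duality/multiplication/interpolation combined with Lemma~\ref{lem:H1GivesMoments}, and absorb a sub-quadratic power of $\|\D v_\varepsilon\|_{L^2(\w^{-\delta})}$ via Young. For $\lambda\le 0$ this is essentially the paper's argument.

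The gap is in your treatment of the nonlinear term when $\lambda>0$, $\sigma<1$. You pass to $\tilde v_\varepsilon=v_\varepsilon e^{-Y_\varepsilon}$ and apply Gagliardo--Nirenberg, which forces you to control $\|v_\varepsilon\,\D Y_\varepsilon\,e^{-Y_\varepsilon}\|_{L^2}$. But Lemma~\ref{lem:NoiseLp} only gives $\EE\bigl[\|\D Y_\varepsilon\|_{L^{q'}(\w^{-\mu})}^q\bigr]\le C|\log\varepsilon|^q$, \emph{not} a $K_\varepsilon$ in the paper's sense (random constants with $L^p(\mathbb P)$-norms bounded uniformly in $\varepsilon$). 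Your bound would therefore read
\[
\|v_\varepsilon\|_{H^1(\w^{-\delta})}\le K_\varepsilon\,(1+|\log\varepsilon|^c)\,(1+\|v_0\|_{H^1(\w^{\delta_0})}^a),
\]
which is strictly weaker than the proposition. This is precisely the reason for the transformation $v=e^Y u$ in the first place: the $H^1$-norm of $u_\varepsilon=\tilde v_\varepsilon$ is \emph{not} uniformly bounded as $\varepsilon\to 0$, so routing the argument through $\D\tilde v_\varepsilon$ reintroduces the divergence the transformation was meant to remove.

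The paper avoids this by never differentiating $e^{-Y_\varepsilon}$. It absorbs $e^{-(2\sigma+2)Y_\varepsilon}$ into a polynomial weight via Corollary~\ref{cor:BoundeY}, applies the weighted Sobolev embedding $H^{\sigma/(\sigma+1)}\hookrightarrow L^{2\sigma+2}$ directly to $v_\varepsilon$, and then interpolates in \emph{weighted} spaces (Lemma~\ref{lem:BesovInterpolation}) so that the positive-weight $L^2$-factor can be fed into Lemma~\ref{lem:H1GivesMoments}. The outcome is the same sub-quadratic exponent $2\sigma+\kappa<2$, but with a genuine $K_\varepsilon$ constant. Your Gagliardo--Nirenberg step is morally the unweighted version of the same inequality; the fix is to run it in the $v_\varepsilon$-variable with weights rather than in $\tilde v_\varepsilon$.
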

\begin{proof}
Note first that $\|v\|_{C([0,T];L^2(\w^{-\delta})}\leq K_\varepsilon \|v_0\|^2_{L^2(\w^{\delta_0})}$ is clear by conservation of mass and Corollary \ref{cor:BoundeY}. Observe further that the claim follows if we can prove it for an arbitrarily small $\delta>0$. 

By the conservation of energy we obtain
\begin{align}
\label{eq:EnergyEstimate}
\int_{\RR^2} |\D v_\varepsilon|^2 e^{-2Y_\varepsilon} \dd x= 2\tilde{H}_\varepsilon(v_0)+\int_{\RR^2} \left( |v_\varepsilon|^2 \,\tDYe +\frac{\lambda}{\sigma+1} |v_\varepsilon |^{2\sigma+2} e^{-2\sigma Y_\varepsilon}  \right) e^{-2Y_\varepsilon}\,\dd x\,.
\end{align}
We estimate the first part of the integral on the right hand side by duality and Besov multiplication rules, \textit{(iii)} and \textit{(iv)} in Lemma \ref{lem:BesovProperties},
\begin{align*}
\int_{\RR^2} |v_\varepsilon|^2 \tDYe \overset{\textit{(iii)}}{\leq} C \||v_\varepsilon|^2\|_{\mathcal{B}^{\frac{1}{2}}_{1,1}(\w^{\delta})}\cdot  \|\tDYe\|_{\mathcal{C}^{-\frac{1}{2}}(\w^{-{\delta}})} 
\overset{\textit{(iv)}}{\leq} 
K_\varepsilon \|v_\varepsilon\|_{H^{\frac{1}{2}}(\w^{\delta/2})}^2\,.
\end{align*}
Now, using weighted interpolation (Lemma \ref{lem:BesovInterpolation}) and Lemma \ref{lem:H1GivesMoments} we have for $\delta$ small enough such that $2\delta<\delta_0$ and $\delta<1-2\delta$
\[
\|v_\varepsilon\|^2_{H^{\frac{1}{2}}(\w^{\delta/2})}\leq C \|v_\varepsilon\|_{L^2(\w^{2\delta})} \|v_\varepsilon\|_{H^1(\w^{-\delta})}\leq K_\varepsilon \|v_0\|_{L^2(\w^{\delta_0})} (1+\|v_\varepsilon\|_{H^1(\w^{-\delta})}^{3/2}) \,.
\]
Putting this into \eqref{eq:EnergyEstimate} and applying Young's inequality then yields
\begin{align}
\label{eq:EnergyEstimate2}
\|v_\varepsilon\|_{H^1(\w^{-\delta})}^2\leq C \tilde{H}_\varepsilon(v_0)+K_\varepsilon(1+\|v_0\|_{L^2(\w^{\delta_0})}^a)+\lambda C\int_{\RR^2} |v_\varepsilon|^{2\sigma+2} e^{-(2\sigma+2)Y_\varepsilon}\, \dd x\,.
\end{align}
If $\lambda\leq 0$ the last term is non-positive and can be dropped, otherwise consider the case $\sigma<1$.
Choose in the following $\kappa>0$ so small that $\sigma+\kappa/2<1$. Fix further $\overline{\delta}\in (0,\delta_0),\,\overline{\delta}'>0$ such that $\kappa \overline{\delta}-(1-\kappa)\overline{\delta}'>0$ and pick finally $\delta>0$ so small that we have both $\frac{\sigma}{\sigma+1}(-\delta)+\frac{1}{\sigma+1}(\kappa\overline{\delta}-(1-\kappa)\overline{\delta}')>0$ and $\delta<1-2\overline{\delta}$. We then have by Sobolev embedding $H^{\frac{\sigma}{\sigma+1}}\subseteq L^{2\sigma+2}$ (\textit{(ii)} in Lemma \ref{lem:BesovProperties}), Corollary \ref{cor:BoundeY}, Lemma \ref{lem:H1GivesMoments} and conservation of mass
\begin{align*}
\int_{\RR^2} |v_\varepsilon|^{2\sigma+2} e^{-(2\sigma+2)Y_\varepsilon}\, \dd x &\leq K_\varepsilon \|v_\varepsilon\|_{H^1(\w^{-\delta})}^{2\sigma}\,\|v_\varepsilon\|^2_{L^2(\w^{\kappa\overline{\delta}-(1-\kappa) \overline{\delta}'})} \\
&\leq K_\varepsilon \|v_\varepsilon\|_{H^1(\w^{-\delta})}^{2\sigma} \,\|v_\varepsilon\|^{2\kappa}_{L^2(\w^{\overline{\delta}})}\,\|v_\varepsilon\|^{2(1-\kappa)}_{L^2(\w^{- \overline{\delta}'})}\\
&\leq K_\varepsilon (1+\|v_0\|_{L^2(\w^{\delta_0})}^a) (1+\|v_\varepsilon\|_{H^1(\w^{-\delta})}^{2(\sigma+\kappa/2)})\,.
\end{align*}
Together with \eqref{eq:EnergyEstimate2} we get, by a further application of Young's inequality, the estimate
\begin{align*}
	\|v_\varepsilon\|_{C([0,T];H^1(\w^{-\delta})}\leq C\tilde{H}_\varepsilon(v_0)+K_\varepsilon\,(1+\|v_0\|_{L^2(\w^{\delta_0})}^a)\,,
\end{align*}
which implies the desired inequality. 
\end{proof}
Combining Proposition \ref{prop:H1Bound} and Lemma \ref{lem:H1GivesMoments} gives a uniform bound on the moments of $v_\varepsilon$.

\begin{corollary}\label{cor:MomentsBelow1}
If $v_\varepsilon$ is a solution to \eqref{eq:MollifiedEquation}  we have for $\gamma\in [0,1)$ and $\delta<(1-\gamma)\delta_0$
\begin{align*}
\|v_0\|_{H^{\gamma}(\w^{\delta})}\leq K_\varepsilon(1+\|v_0\|_{H^1(\w^{\delta_0})}^a)\,,
\end{align*}
for some deterministic $a>0$.
\end{corollary}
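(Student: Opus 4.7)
The plan is to obtain the statement by a direct interpolation between the two bounds we have already established: the $H^1(\w^{-\delta'})$ estimate of Proposition \ref{prop:H1Bound} and the $L^2(\w^{\tilde\delta})$ estimate extracted from Lemma \ref{lem:H1GivesMoments}. The formulation of the corollary (and the constraint $\delta<(1-\gamma)\delta_0$) is exactly what one would expect from such an interpolation with parameter $\gamma$ between weights $\tilde\delta\approx\delta_0$ and $-\delta'\approx 0$.

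More concretely, I would first fix some small $\delta'>0$ (to be chosen at the end) and pick $\tilde\delta\in(0,\delta_0)$ with $\delta'<1-2\tilde\delta$; this is possible because $\delta_0<1/2$. Proposition \ref{prop:H1Bound} then yields
\begin{align*}
\|v_\varepsilon\|_{C([0,T];H^1(\w^{-\delta'}))}\leq K_\varepsilon\bigl(1+\|v_0\|_{H^1(\w^{\delta_0})}^a\bigr),
\end{align*}
while Lemma \ref{lem:H1GivesMoments} applied with weight $\tilde\delta$ bounds $\|v_\varepsilon\|_{C([0,T];L^2(\w^{\tilde\delta}))}^2$ by $\|v_0\|_{L^2(\w^{\delta_0})}^2$ plus a term controlled by $\|\D v_\varepsilon\|_{C([0,T];L^2(\w^{-\delta'}))}$, which is itself bounded by the $H^1(\w^{-\delta'})$ norm above. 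Adjusting the exponent $a$ if necessary, this yields a bound of the same form for $\|v_\varepsilon\|_{C([0,T];L^2(\w^{\tilde\delta}))}$.

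Next I would apply Lemma \ref{lem:BesovInterpolation} with $(p_0,q_0,\alpha_0,\mu_0)=(2,2,0,\tilde\delta)$, $(p_1,q_1,\alpha_1,\mu_1)=(2,2,1,-\delta')$ and $\varTheta=\gamma$ to get
\begin{align*}
\|v_\varepsilon\|_{H^\gamma(\w^{\mu})}\leq C\,\|v_\varepsilon\|_{L^2(\w^{\tilde\delta})}^{1-\gamma}\,\|v_\varepsilon\|_{H^1(\w^{-\delta'})}^{\gamma}
\end{align*}
with $\mu=(1-\gamma)\tilde\delta-\gamma\delta'$. Given any $\delta<(1-\gamma)\delta_0$, I can choose $\tilde\delta$ sufficiently close to $\delta_0$ and $\delta'$ sufficiently small so that $\mu\geq\delta$; the embedding of part (i) of Lemma \ref{lem:BesovProperties} then turns this into a bound of the $H^\gamma(\w^\delta)$ norm, and combining with the two preceding estimates produces the claimed inequality (with a new constant $a$).

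There is no real obstacle here: the algebra of the weight inequality $\delta<(1-\gamma)\tilde\delta-\gamma\delta'\leq(1-\gamma)\delta_0$ is routine, and the constraints $\tilde\delta<\delta_0$ and $\delta'<1-2\tilde\delta$ needed for Lemma \ref{lem:H1GivesMoments} are compatible thanks to the standing assumption $\delta_0<1/2$. The only mildly delicate point is bookkeeping the various random constants $K_\varepsilon$ and the polynomial exponent $a$ so that the final estimate still has the stated form, which follows by absorbing Young-type products into a single power of $(1+\|v_0\|_{H^1(\w^{\delta_0})})$.
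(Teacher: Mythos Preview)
Your proposal is correct and follows essentially the same approach as the paper: combine Proposition \ref{prop:H1Bound} with Lemma \ref{lem:H1GivesMoments} to obtain an $L^2(\w^{\tilde\delta})$ bound, then interpolate via Lemma \ref{lem:BesovInterpolation} with the $H^1(\w^{-\delta'})$ bound. You have in fact spelled out the weight bookkeeping more carefully than the paper does.
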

\begin{proof}
Inserting the estimate of Proposition \ref{prop:H1Bound} in Lemma \ref{lem:H1GivesMoments} we obtain for $\delta<\delta_0$
\begin{align*}
\|v_0\|_{L^2(\w^{\delta})}\leq K_\varepsilon(1+\|v_0\|_{H^1(\w^{\delta_0})}^a)\,.
\end{align*}
The result then follows by applying the interpolation inequality in Lemma \ref{lem:BesovInterpolation} with Proposition \ref{prop:H1Bound}.
\end{proof}
\section{Local existence}
\label{sec:LocalExistence}
To conclude existence a bound in $H^\gamma,\,\gamma>1$ is needed to make sense of the product term $-2\D v_\varepsilon \D Y_\varepsilon$ in the limit $\varepsilon\rightarrow 0$. In \cite{SchroedingerTorus} this was achieved by estimating the $L^2$ norm of the time derivative $w_\varepsilon= \frac{\dd}{\dd t} v_\varepsilon$, which morally corresponds to bounding the $H^2$ bound of $v_\varepsilon$. A key role in this estimate is the Brezis-Gallouet inequality \cite{BrezisGallouet} which is ill-suited for our approach based on polynomial weights. In Section \ref{sec:GlobalExistence} we show that if $\sigma<1/2$ this approach can be modified to yield global existence on $\RR^2$.  We here prove that one has local existence in time provided that one has either $\lambda\leq 0$ or $\sigma<1$
\begin{lemma}\label{lem:BlowUpH2}
Let $v_\varepsilon$ be the unique solution to \eqref{eq:MollifiedEquation} with $\lambda\leq 0$ or $\sigma<1$ on $[0,T]$. We then have for $\delta>0$
\begin{align*}
\|v_\varepsilon\|_{C([0,T];H^2(\w^{-\delta}))}\leq  K_\varepsilon (1+\|v_0\|_{H^2(\w^{\delta_0})}^a)\, e^{C T\|v_\varepsilon e^{-Y_\varepsilon}\|_{C([0,T];L^\infty(\RR^2))}^{2\sigma}} (1+|\log(\varepsilon)|^a)\,,
\end{align*}
for some deterministic $a>0$.
\end{lemma}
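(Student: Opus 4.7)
The plan is to differentiate \eqref{eq:MollifiedEquation} in time and bound $w_\varepsilon:=\partial_t v_\varepsilon$ in $L^2(\w^{-\delta})$ by an energy argument; the $H^2(\w^{-\delta})$ bound on $v_\varepsilon$ is then recovered algebraically by isolating the Laplacian. Writing
\begin{align*}
\Delta v_\varepsilon = \ii w_\varepsilon - v_\varepsilon \tDYe + 2\D v_\varepsilon \D Y_\varepsilon - \lambda |v_\varepsilon e^{-Y_\varepsilon}|^{2\sigma} v_\varepsilon,
\end{align*}
I would bound $\|\Delta v_\varepsilon\|_{L^2(\w^{-\delta})}$ by $\|w_\varepsilon\|_{L^2(\w^{-\delta})}$ plus lower-order terms handled via Besov multiplication (Lemma \ref{lem:BesovProperties}\,\textit{(iv)}), the noise estimates \eqref{eq:BoundSmoothenedNoise} and Lemma \ref{lem:NoiseLp} (the source of the $|\log\varepsilon|^a$ factor), together with the $H^{\gamma}(\w^{\delta'})$, $\gamma<1$, bound of Corollary \ref{cor:MomentsBelow1}.

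The equation for $w_\varepsilon$ reads
\begin{align*}
\ii \partial_t w_\varepsilon = \Delta w_\varepsilon + w_\varepsilon \tDYe - 2\D w_\varepsilon \D Y_\varepsilon + \lambda\,\partial_t\bigl(|v_\varepsilon|^{2\sigma}v_\varepsilon\bigr)\,e^{-2\sigma Y_\varepsilon}\,.
\end{align*}
Mimicking the weight pairing used in the proof of Lemma \ref{lem:H1GivesMoments}, I would compute $\frac{\dd}{\dd t}\int |w_\varepsilon|^2 \w^{-2\delta} e^{-2Y_\varepsilon}\,\dd x$. The contributions from $\Delta w_\varepsilon$ and $-2\D w_\varepsilon\D Y_\varepsilon$ combine into a commutator with the polynomial weight, leaving only a first-order expression of the shape $\w^{-2\delta-1}|\D w_\varepsilon||w_\varepsilon|$ absorbable by Cauchy-Schwarz; the $w_\varepsilon\tDYe$ term vanishes since $|w_\varepsilon|^2\tDYe$ is real. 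Expanding $\partial_t(|v_\varepsilon|^{2\sigma}v_\varepsilon)=(\sigma+1)|v_\varepsilon|^{2\sigma}w_\varepsilon+\sigma|v_\varepsilon|^{2\sigma-2}v_\varepsilon^2\,\overline{w}_\varepsilon$ and taking imaginary parts yields a differential inequality of the form
\begin{align*}
\frac{\dd}{\dd t}\int |w_\varepsilon|^2 \w^{-2\delta} e^{-2Y_\varepsilon}\,\dd x \leq C\,\|v_\varepsilon e^{-Y_\varepsilon}\|_{L^\infty}^{2\sigma}\int |w_\varepsilon|^2 \w^{-2\delta} e^{-2Y_\varepsilon}\,\dd x,
\end{align*}
whence Gronwall produces the announced factor $e^{CT\|v_\varepsilon e^{-Y_\varepsilon}\|_{C([0,T];L^\infty)}^{2\sigma}}$.

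It remains to bound $\|w_\varepsilon(0)\|_{L^2(\w^{-\delta})}$, which I read off the equation at $t=0$:
\begin{align*}
w_\varepsilon(0) = -\ii \bigl( \Delta v_0 + v_0 \tDYe - 2\D v_0 \D Y_\varepsilon + \lambda |v_0 e^{-Y_\varepsilon}|^{2\sigma} v_0 \bigr).
\end{align*}
The Laplacian piece is dominated by $\|v_0\|_{H^2(\w^{\delta_0})}$; the product $v_0\tDYe$ is controlled by Besov multiplication, pairing $v_0\in H^2(\w^{\delta_0}) \hookrightarrow L^\infty$ with $\|\tDYe\|_{L^p(\w^{-\delta'})}\leq K_\varepsilon|\log\varepsilon|$ from Lemma \ref{lem:NoiseLp}; the gradient product pairs $\D v_0\in L^{p'}(\w^{\delta_0})$ (Sobolev from $H^1$) against $\D Y_\varepsilon\in L^p(\w^{-\delta'})$ with $\tfrac{1}{p}+\tfrac{1}{p'}=\tfrac{1}{2}$, the latter again contributing the $|\log\varepsilon|$ factor; the nonlinear piece is immediate from $v_0\in L^\infty$ and Corollary \ref{cor:BoundeY}. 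All contributions collectively give $\|w_\varepsilon(0)\|_{L^2(\w^{-\delta})}\leq K_\varepsilon(1+\|v_0\|_{H^2(\w^{\delta_0})}^a)(1+|\log\varepsilon|^a)$.

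The main obstacle is the weighted energy estimate for $w_\varepsilon$ itself: the polynomial weight $\w^{-2\delta}$ destroys the self-adjoint structure exploited on the torus in \cite{SchroedingerTorus}, so the commutators between $\Delta$, the transport-type operator $-2\D\cdot\D Y_\varepsilon$ and the combined weight $\w^{-2\delta}e^{-2Y_\varepsilon}$ must be arranged to produce only terms that are either symmetric (vanishing under $\mathrm{Im}$) or strictly lower order in $w_\varepsilon$ and therefore absorbable. The choice to pair $\w^{-2\delta}$ with $e^{-2Y_\varepsilon}$, exactly as in Lemma \ref{lem:H1GivesMoments}, is what makes this bookkeeping tractable; once it is in place the rest of the proof follows the scheme of the torus case, with the $|\log\varepsilon|^a$ growth traced back to Lemma \ref{lem:NoiseLp}.
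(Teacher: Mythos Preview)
Your overall architecture matches the paper's --- differentiate in time, run Gronwall on $w_\varepsilon$, then reconstruct $\Delta v_\varepsilon$ from the equation --- but the energy identity you propose for $w_\varepsilon$ does not close. When you insert the polynomial factor $\w^{-2\delta}$ alongside $e^{-2Y_\varepsilon}$, the commutator between $\Delta$ and $\w^{-2\delta}$ leaves exactly the term
\[
\mathrm{Im}\int_{\RR^2} (\nabla\w^{-2\delta})\cdot\nabla w_\varepsilon\,\overline{w}_\varepsilon\,e^{-2Y_\varepsilon}\,\dd x,
\]
of the shape you wrote. This is \emph{not} lower order: it contains $\nabla w_\varepsilon=\partial_t\nabla v_\varepsilon$, morally $H^3$ information on $v_\varepsilon$, and there is no coercive $\int|\nabla w_\varepsilon|^2$ term to absorb it into (that contribution is real and drops out under $\mathrm{Im}$). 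The analogous term in Lemma~\ref{lem:H1GivesMoments} was harmless only because there the gradient was $\nabla v_\varepsilon$, precisely the quantity controlled by the $H^1$ bound; here you would need $w_\varepsilon\in H^1$, which you do not have.

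The paper sidesteps this by running the mass identity for $w_\varepsilon$ with the \emph{pure} weight $e^{-2Y_\varepsilon}$ and no polynomial factor. For that measure the operator $\Delta-2\nabla Y_\varepsilon\cdot\nabla$ is exactly symmetric, so one obtains a clean Gronwall inequality $\int|w_\varepsilon(t)|^2 e^{-2Y_\varepsilon}\leq \int|w_\varepsilon(0)|^2 e^{-2Y_\varepsilon}\cdot e^{CT\|v_\varepsilon e^{-Y_\varepsilon}\|_{L^\infty}^{2\sigma}}$. The polynomial weights enter only \emph{afterwards}, via Corollary~\ref{cor:BoundeY}: one estimates $\|w_\varepsilon(t)\|_{L^2(\w^{-\delta})}^2\leq K_\varepsilon\int|w_\varepsilon(t)|^2 e^{-2Y_\varepsilon}$ on the left and $\int|w_\varepsilon(0)|^2 e^{-2Y_\varepsilon}\leq K_\varepsilon\|w_\varepsilon(0)\|_{L^2(\w^{\delta_0/2})}^2$ on the right. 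A secondary point: when you isolate $\Delta v_\varepsilon(t)$, the product $\nabla v_\varepsilon\,\nabla Y_\varepsilon$ in $L^2(\w^{-\delta})$ requires $\nabla v_\varepsilon\in L^q$ with $q>2$, which is beyond the $H^1$ bound and is not supplied by Corollary~\ref{cor:MomentsBelow1} either; the paper interpolates to $H^{3/2}$, producing a factor $\|v_\varepsilon(t)\|_{H^2(\w^{-\delta})}^{1/2}$ that is then absorbed into the left-hand side.
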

\begin{proof}
We consider as in \cite{SchroedingerTorus} the quantity $w_\varepsilon=\partial_t v_\varepsilon$ which satisfies the equation
\begin{align*}
\ii\partial_t w_\varepsilon=\Delta w_\varepsilon+w_\varepsilon\,\tDYe-2\D w_\varepsilon\D Y_\varepsilon+\lambda|v_\varepsilon e^{-Y_\varepsilon}|^{2\sigma} w_\varepsilon+\sigma\lambda v_\varepsilon|v_\varepsilon |^{2\sigma-2}\,2\,\R{w_\varepsilon\overline{v}_\varepsilon}\,e^{-2\sigma Y_\varepsilon}
\end{align*}
and whose mass evolves like 
\begin{align}
\label{eq:Derivew}
\frac{1}{2}\frac{\dd}{\dd t}\int_{\RR^2} |w_\varepsilon|^2 e^{-2Y_\varepsilon} \, \dd x &=2\sigma \lambda \int_{\RR^2}  \I{\overline{w}_\varepsilon v_\varepsilon} \R{w_\varepsilon \overline{v}_\varepsilon}\,|v_\varepsilon|^{2\sigma-2} e^{-(2\sigma+2)Y_\varepsilon}\dd x \\
&\leq C \|v_\varepsilon e^{-Y_\varepsilon}\|_{L^\infty}^{2\sigma} \int_{\RR^2} |w_\varepsilon|^2 e^{-2Y_\varepsilon}\, \dd x\,. \nonumber
\end{align}
Gronwall's lemma provides then
\begin{align}
\label{eq:H2BlowUp}
	\int_{\RR^2} |w_\varepsilon(t)|^2 e^{-2Y_\varepsilon}\leq \int_{\RR^2} |w(0)|^2 e^{-2Y_\varepsilon}\cdot e^{C T\|v_\varepsilon e^{-Y_\varepsilon}\|_{C([0,T];L^\infty(\RR^2))}^{2\sigma}} \,.
\end{align}
Recall that $\ii w_\varepsilon=\Delta v_\varepsilon+v_\varepsilon \, \tDYe -2\D v_\varepsilon \D Y_\varepsilon+\lambda |v_\varepsilon |^{2\sigma} v_\varepsilon e^{-2 \sigma Y_\varepsilon} $. By Sobolev embedding (\textit{(ii)} in Lemma \ref{lem:BesovProperties}) we have $\|v_0\|_{L^q(\w^{\delta})},\,\|\D v_0\|_{L^q(\w^{\delta})}\leq  C\|v_0\|_{H^2(\w^{\delta})}$ for $q>2,\,\delta\leq \delta_0$. Choose $q$ close enough to $2$ such that $q'$ with $\frac{1}{2}=\frac{1}{q}+\frac{1}{q'}$ satisfies $q'\cdot \delta_0>4$. We then have with Lemma \ref{lem:NoiseLp} 
\begin{align}
\|w_\varepsilon(0)\|_{L^2(\w^{\frac{\delta_0}{2}})} &\leq  C\|v_\varepsilon(0)\|_{H^2(\w^{\frac{\delta_0}{2}})}+\|v_\varepsilon(0)\|_{L^q(\w^{\delta_0/2})}\|\tDYe\|_{L^{q'(\w^{-\delta_0/2})}}\nonumber\\
&+2\|\D v_\varepsilon(0)\|_{L^q(\w^{\delta_0})} \|\D Y_\varepsilon\|_{L^{q'}(\w^{-\delta_0/2})} 
+\lambda \||v_\varepsilon(0)|^{2\sigma+1}\|_{L^q(\w^{\delta_0})} \|e^{-2\sigma Y_\varepsilon}\|_{L^{q'}(\w^{-\frac{\delta_0}{2}})}\nonumber\\
&\leq K_\varepsilon (1+\|v_0\|_{H^2(\w^{\delta_0})}^a)\cdot(1+|\log \varepsilon|^a)\,,
\label{eq:H2BlowUpRHS}
\end{align}
where we used Corollary \ref{cor:MomentsBelow1} and Sobolev embedding (\textit{(ii)} of Lemma \ref{lem:BesovProperties}) in the last step. 
On the other hand we have for $\delta>0$ and 
\begin{align*}
\|\Delta v_\varepsilon(t)\|_{L^2(\w^{-\delta})}&\leq \|w_\varepsilon(t)\|_{L^2(\w^{-\delta})}+\|v_\varepsilon(t)\|_{L^q(\w^{-3\delta/4})} \|\tDYe\|_{L^{q'}(\w^{-\delta/4})} \\&
+2\|\D v_\varepsilon(t)\|_{L^q(\w^{-3\delta/4})}\|\D Y_\varepsilon\|_{L^{q'}(\w^{-\delta/4})}\\
&+C\||v_\varepsilon(t)|^{2\sigma+1}\|_{L^q(\w^{-3\delta/4})} \|e^{-(2\sigma+1) Y_\varepsilon}\|_{L^{q'}(\w^{-\delta/4})} \,.
\end{align*}
where now $q\in (2,4)$ is small enough such that $q'$ with $\frac{1}{2}=\frac{1}{q}+\frac{1}{q'}$ satisfies $q'\cdot \delta>8$. By Sobolev embedding we have 
\[
\|v_\varepsilon(t)\|_{L^q(\w^{-3\delta/4})},\,\|\D v_\varepsilon(t)\|_{L^q(\w^{-3\delta/4})}\|\leq \|v_\varepsilon(t)\|_{H^{3/2}(\w^{-3\delta/4})}\leq \|v_\varepsilon(t)\|_{H^1(\w^{-\delta/2})}^{\frac{1}{2}} \,\|v_\varepsilon(t)\|_{H^{2}(\w^{-\delta})}^{\frac{1}{2}}\,.
\]
Applying Proposition \ref{prop:H1Bound} we therefore obtain for some $a>0$
\begin{align*}
\|\Delta v_\varepsilon(t)\|_{L^2(\w^{-\delta})}\leq \|w_\varepsilon(t)\|_{L^2(\w^{-\delta})}+K_\varepsilon (1+\|v_0\|_{H^2(\w^{\delta_0})}^a)(1+ |\log \varepsilon|^a) \,\|v_\varepsilon(t)\|_{H^2(\w^{-\delta})}^{1/2}\,.
\end{align*}
It is easy to see via \eqref{eq:PullWeight} that $\|g\|_{H^2(\w^{-\delta})}\leq C(\|g\|_{H^1(\w^{-\delta})}+\|\Delta g\|_{L^2(\w^{-\delta})})$ so that we obtain
\begin{align}
\label{eq:H2BlowUpLHS}
\|v_\varepsilon(t)\|_{H^2(\w^{-\delta})}\leq \|w_\varepsilon(t)\|_{L^2(\w^{-\delta})}+K_\varepsilon (1+\|v_0\|_{H^2(\w^{\delta_0})}^a)(1+ |\log \varepsilon|^a)\,.
\end{align}
Applying \eqref{eq:H2BlowUpLHS} to the left hand side and \eqref{eq:H2BlowUpRHS} to the right hand side of \eqref{eq:H2BlowUp} via Corollary \ref{cor:BoundeY} shows the desired estimate. 
\end{proof}
\begin{remark}
There was a technical subtlety in this proof which we hid from the reader for the sake of a clearer argument. Note that we do not know if the time derivative in \eqref{eq:Derivew} is well-defined. Due to the non-integer value of $\sigma$ it is not clear that we have even for smooth initial conditions smooth solutions which would allow for such an operation. For a rigoruos argument one really has to work instead with  $v_\varepsilon^{n}$, solution to $\ii \partial_t v^n_\varepsilon=\Delta v^n_\varepsilon +2 v^n_\varepsilon \tDYe- 2\D v_\varepsilon^n \D Y^\varepsilon +(|v^n_\varepsilon e^{-Y_\varepsilon}|^2+\frac{1}{n})^\sigma v^n_\varepsilon,\,\mathcal{S}(\RR^d)\ni v^n_\varepsilon(0)\rightarrow v_0$. One then easily derives bounds as above. Working with the backtransformed solution $u^n_\varepsilon=v^n_\varepsilon e^{-Y_\varepsilon}$ one can prove a $L^\infty(\RR^2)$ bound, uniform in $n$, on $v^\varepsilon_n e^{-Y_\varepsilon}$. One thus gets boundedness, uniform in $n$, of $\|v^n_\varepsilon\|_{H^2(\w^{-\delta})}$. By choice of a weakly convergent subsequence and the compact embedding $H^{\gamma}(\w^\delta)\subseteq H^{\gamma'} (\w^{\delta'}),\,\gamma'<\gamma,\delta'<\delta$ one concludes. 
\end{remark}

This estimate is sufficient to prove local existence. We follow \cite{SchroedingerTorus} in the consideration of the differences of the dyadic sequence. Recall that the notation $K_k$ in the following stands for a random constant of the form $K_k=K_{2^{-k}}K_{2^{-k-1}}$ that can be bounded almost surely in $k$. To derive the latter property we will always use, without mentioning, the results in subsection \ref{subsec:Noise}.
\begin{lemma}
\label{lem:ConvergenceDyadicSequence}
Let $\vk$ be the unique solution to \eqref{eq:MollifiedEquation} on $[0,T]$ with $\varepsilon=2^{-k}$ and $\lambda\leq 0$ or $\sigma<1$. We then have for $\gamma\in (0,2),\,\delta<(1-\frac{\gamma}{2})\delta_0$
\begin{align*}
&\|\vk-\vkp\|_{C([0,T];H^\gamma(\w^\delta))}\leq \\ & K_{k} 2^{-k\kappa} (1+\|v_0\|_{H^2(\w^{\delta_0})}^a)\,e^{CT (\|\vk e^{-\Yk}\|^{2\sigma}_{C([0,T];L^\infty)}+\|\vkp e^{-\Ykp}\|^{2\sigma}_{C([0,T];L^\infty)})}\,.
\end{align*}
for some $\kappa>0$ and $a>0$, where the sequence of random constants $K_{k}$ is bounded almost surely. 
\end{lemma}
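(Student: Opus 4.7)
The plan is to follow \cite{SchroedingerTorus} and perform an energy estimate on the mass-type quantity $\int|d_k|^2 e^{-2\Yk}\,\dd x$ for $d_k := \vk-\vkp$, which I close by Gronwall's inequality using $d_k(0)=0$. Subtracting the two copies of \eqref{eq:MollifiedEquation} gives
\[
\ii\partial_t d_k = \Delta d_k + d_k\,\tDY_{2^{-k}} - 2\D d_k\cdot\D\Yk + \lambda\,|\vk e^{-\Yk}|^{2\sigma} d_k + r_k,
\]
where the remainder $r_k$ collects all terms in which some factor of $\Yk$, $\tDY_{2^{-k}}$ or $e^{-\Yk}$ has been shifted to level $k+1$. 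Differentiating $\int|d_k|^2 e^{-2\Yk}\,\dd x$ in time and taking the imaginary part as in the proof of Lemma \ref{lem:H1GivesMoments}, the integration-by-parts contribution $2\I\int\D d_k\cdot\D\Yk\,\overline{d_k}e^{-2\Yk}\,\dd x$ coming from the Laplacian exactly cancels the diagonal $-2\D d_k\cdot\D\Yk$ piece, while the terms $d_k\,\tDY_{2^{-k}}\,\overline{d_k}$ and $\lambda|\vk e^{-\Yk}|^{2\sigma}|d_k|^2$ are real and drop out under $\I$. For the nonlinearity difference I would use the pointwise bound $\bigl||a|^{2\sigma}a-|b|^{2\sigma}b\bigr|\le C(|a|^{2\sigma}+|b|^{2\sigma})|a-b|$ with $a=\vk e^{-\Yk}$, $b=\vkp e^{-\Ykp}$, decomposing $a-b=d_k e^{-\Yk}+\vkp(e^{-\Yk}-e^{-\Ykp})$; the first summand supplies the Gronwall coefficient $C(\|\vk e^{-\Yk}\|_\infty^{2\sigma}+\|\vkp e^{-\Ykp}\|_\infty^{2\sigma})$ multiplying $\int|d_k|^2 e^{-2\Yk}\,\dd x$, while the second is absorbed into $r_k$.

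The source integrals $\int v_{k+1}(\tDY_{2^{-k}}-\tDY_{2^{-k-1}})\overline{d_k}e^{-2\Yk}\,\dd x$, $\int \D v_{k+1}\cdot(\D\Yk-\D\Ykp)\overline{d_k}e^{-2\Yk}\,\dd x$ and the remaining pieces of $r_k$ I would bound by duality and Besov multiplication, as in the proof of Proposition \ref{prop:H1Bound}. Each noise difference has $\mathcal{C}^\alpha(\w^{-\delta})$ or $\mathcal{C}^{\alpha-1}(\w^{-\delta})$ norm of order $K_k 2^{-k\kappa}$ by \eqref{eq:ConvergenceeY}, \eqref{eq:ConvergenceSmoothTerm} and their analogues for $\wDY$ and $\D Y$, and the accompanying $v$- and $\D v$-factors are controlled by the a priori $H^2(\w^{-\delta})$ estimate of Lemma \ref{lem:BlowUpH2}. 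Any polynomial-in-$k$ factor (in particular the $|\log 2^{-k}|^a$ from Lemma \ref{lem:BlowUpH2}) is absorbed by replacing $\kappa$ with a strictly smaller exponent. Gronwall's inequality with vanishing initial data then yields the asserted bound, but for the $\int|d_k|^2 e^{-2\Yk}\,\dd x$ norm instead of the target $H^\gamma(\w^\delta)$.

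To pass to $H^\gamma(\w^\delta)$ with the \emph{positive} weight $\delta<(1-\gamma/2)\delta_0$, I would first use Corollary \ref{cor:BoundeY} applied to $e^{2\Yk}$ to upgrade the $e^{-2\Yk}$-weighted $L^2$ bound into an $L^2(\w^{-\delta_*})$ bound for arbitrarily small $\delta_*>0$. A complementary, non-decaying positive-weight bound $\|d_k\|_{L^2(\w^{\mu_1})}\le K_k(1+\|v_0\|_{H^1(\w^{\delta_0})}^a)$ for any $\mu_1<\delta_0$ follows from the triangle inequality together with Lemma \ref{lem:H1GivesMoments} and Proposition \ref{prop:H1Bound}. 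Log-convex interpolation between these two $L^2$ estimates (Lemma \ref{lem:BesovInterpolation}) yields $L^2(\w^\mu)$ decay with a slightly smaller exponent for any $\mu<\delta_0$, and a final interpolation against the $H^2(\w^{-\delta'})$ bound of Lemma \ref{lem:BlowUpH2} produces the desired estimate. The main obstacle is precisely this balancing of signed weights: one cannot run the energy estimate directly in a polynomially weighted space, because the $\D\w^{2\delta}$ cross-term from integration by parts leaves behind a residue that, even after Cauchy-Schwarz, contributes a $K_k$-sized term without the $2^{-k\kappa}$ factor; the decay must first be obtained in a negative-weight norm and only afterwards traded for positive weight through interpolation.
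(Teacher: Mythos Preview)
Your approach is essentially the paper's: an energy estimate on the mass of the difference, Gronwall with vanishing initial data, and interpolation to reach $H^\gamma(\w^\delta)$. The choice of base level ($k$ versus $k+1$) and of the exponential weight $e^{-2\Yk}$ versus $e^{-2\Ykp}$ is immaterial.

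One point deserves tightening. When you bound the source integrals such as $\int \D v_{2^{-k-1}}\cdot(\D\Yk-\D\Ykp)\,\overline{d_k}\,e^{-2\Yk}\,\dd x$ by duality, the noise difference lives in $\mathcal{C}^{-1/2}(\w^{-\delta'})$, so the factor $\D v_{2^{-k-1}}\,\overline{d_k}\,e^{-2\Yk}$ must be placed in $\mathcal{B}^{1/2}_{1,1}(\w^{\delta'})$ with a \emph{positive} weight. Lemma~\ref{lem:BlowUpH2} alone gives only $H^2(\w^{-\delta})$, which is on the wrong side. The paper handles this by interpolating Lemma~\ref{lem:BlowUpH2} with Corollary~\ref{cor:MomentsBelow1} to obtain $\|v_{2^{-k}}\|_{H^{3/2}(\w^{\delta'})}$ for small $\delta'>0$; you should invoke the same interpolation here rather than Lemma~\ref{lem:BlowUpH2} by itself. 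You already use exactly this trick in your final paragraph, so this is only a local imprecision, not a gap in strategy.

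Your closing discussion of how to reach the positive weight $\w^\delta$---first landing in $L^2(\w^{-\delta_*})$, then interpolating against the non-decaying $L^2(\w^{\mu_1})$ bound from Corollary~\ref{cor:MomentsBelow1}, and finally against $H^2(\w^{-\delta'})$---is a correct and more explicit unpacking of what the paper compresses into the single sentence ``by interpolation with Corollary~\ref{cor:MomentsBelow1} and Lemma~\ref{lem:BlowUpH2}''.
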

\begin{proof}
The difference $r_k=\vk-\vkp$ satisfies the equation
\begin{align*}
\ii \partial_t r_k&=\Delta r_k +r_k \, \widetilde{\Wick{\D Y_{2^{-k-1}}^2}}-2\D r_k \D Y_{2^{-k-1}}+\vk (\widetilde{\Wick{\D Y_{2^{-k-1}}^2}}-\widetilde{\Wick{\D Y_{2^{-k}}^2}}) \\
&-2 \D \vk(\D Y_{2^{-k-1}}-\D Y_{2^{-k}})+\lambda (|\vk e^{- Y_{2^{-k}}}|^{2\sigma}\vk -|\vkp e^{-Y_{2^{-k-1}}}|^{2\sigma}\vkp )\,,
\end{align*}
so that the ``mass'' of $r_k$ evolves according to 
\begin{align*}
\frac{\dd}{\dd t} \int_{\RR^2} |r_k|^2 e^{-2\Ykp}&=\mathrm{Im}\Bigg\{\int_{\RR^2} \Big(
\vk (\widetilde{\Wick{\D \Ykp^2}}-\widetilde{\Wick{\D \Yk^2}}) \overline{r}_k e^{-2\Ykp} \\
&-2 \D \vk(\D \Ykp-\D \Yk) \overline{r}_k e^{-2\Ykp}\\
&+\lambda (|\vk e^{- Y_{2^{-k}}}|^{2\sigma}\vk -|\vkp e^{-Y_{2^{-k-1}}}|^{2\sigma}\vkp )\overline{r}_k e^{-2\Ykp}\Big)\,\dd x\Bigg\}\,.
\end{align*}
Via \textit{(iii)}, \textit{(iv)} of Lemma \ref{lem:BesovProperties} (Duality and Multiplication bound)  and using interpolation between the bound in Corollary \ref{cor:MomentsBelow1} and Lemma \ref{lem:BlowUpH2} we can estimate the first two terms on the right hand side, up to a constant, by
\begin{align*}
&\|\vk \overline{r}_k e^{-2\Ykp} \|_{\mathcal{B}^{\frac{1}{2}}_{1,1}(\w^{\delta'})}\,\|\widetilde{\Wick{\D \Ykp^2}}-\widetilde{\Wick{\D \Yk^2}}\|_{\mathcal{C}^{-\frac{1}{2}}(\w^{-\delta')})} \\
&+\|\D \vk \overline{r}_k e^{-2\Ykp}\|_{\mathcal{B}^{\frac{1}{2}}_{1,1}(\w^{\delta'})}\,\|(\D \Ykp-\D \Yk)\|_{\mathcal{C}^{-\frac{1}{2}}(\w^{-\delta'})}\\ 
&\leq K_k (\|\vk\|_{H^{\frac{3}{2}}(\w^{\delta'})}^2+\|\vkp\|_{H^{\frac{3}{2}}(\w^{\delta'})}^2)\,2^{-k\kappa'}\\
& \leq (1+\|v_0\|_{H^2(\w^{\delta_0})}^a) K_{k} 2^{-k\kappa'/2}\,e^{CT (\|\vk\|^{2\sigma}_{C([0,T];L^\infty)}+\|\vkp\|^{2\sigma}_{C([0,T];L^\infty)})}\,,
\end{align*}
for $\delta'\in (0,\delta_0/4)$ and $\kappa'<1/2$.
Up to a term $K_k 2^{-k\kappa} (1+\|v_0\|_{H^1(\w^{\delta_0})}^a)$ we can reshape the third term as
\begin{align*}
&\mathrm{Im}\left\{\int_{\RR^2} \lambda (|\vk e^{-\Yk}|^{2\sigma}\vk e^{- \Yk}-|\vkp e^{-\Ykp}|^{2\sigma}\vkp e^{- \Ykp})\overline{r}_k e^{-\Ykp} \, \dd x\right\}\,.
\end{align*}
Recall that for $x,y\in\mathbb{C}$ $||x|^{2\sigma}x-|y|^{2\sigma}y|\leq C (|x|^{2\sigma}+|y|^{2\sigma})\,|x-y|$ (see for example \cite[p. 86]{Cazenave}) so that we obtain the upper bound
\begin{align*}
\int_{\RR^2} (|\vk e^{-\Yk}|^{2\sigma}+|\vkp e^{-\Ykp}|^{2\sigma}) |\vk e^{-\Yk}-\vkp e^{-\Ykp}| \overline{r}_k e^{-\Ykp} \, \dd x \,,
\end{align*}
Applying \eqref{eq:ConvergenceeY} and Corollary \ref{cor:MomentsBelow1} we can bound this up to a term $ K_k (1+\|v_0\|_{H^1(\w^{\delta_0})}^a) 2^{-k\kappa} $ by 
\begin{align*}
&\int_{\RR^2} (|\vk e^{-\Yk}|^{2\sigma}+|\vkp e^{-\Ykp}|^{2\sigma}) |r_k|^2 e^{-2\Ykp} \, \dd x \\
 &\leq (\|\vk e^{-\Yk}\|_{L^\infty}^{2\sigma}+\|\vkp e^{-\Ykp}\|_{L^\infty}^{2\sigma}) \int_{\RR^2} |r_k|^2 e^{-2\Ykp} \, \dd x\,.
\end{align*} 
Putting everything together we have
\begin{align*}
\frac{\dd}{\dd t} \int_{\RR^2} |r_k|^2 e^{-2\Ykp} &\leq K_{k} 2^{-k\kappa}\,e^{CT (\|\vk\|^{2\sigma}_{C([0,T];L^\infty)}+\|\vkp\|^{2\sigma}_{C([0,T];L^\infty)})} \\ 
&+C(\|\vk e^{-\Yk}\|_{L^\infty}^{2\sigma}+\|\vkp e^{-\Ykp}\|_{L^\infty}^{2\sigma}) \int_{\RR^2} |r_k|^2 e^{-2\Ykp} \, \dd x\,,
\end{align*}
for some $\kappa>0$. Application of Gronwall's lemma gives together with Corollary \ref{cor:BoundeY}
\begin{align*}
&\|\vk-\vkp\|_{C([0,T];L^2(\w^{-\delta}))}\leq \\ & K_{k} 2^{-k\kappa} (1+\|v_0\|_{H^2(\w^{\delta_0})})\,e^{CT (\|\vk e^{-\Yk}\|^{2\sigma}_{C([0,T];L^\infty)}+\|\vkp e^{-\Ykp}\|^{2\sigma}_{C([0,T];L^\infty)})}\,.
\end{align*}
for any $\delta>0$. The desired estimate now follows by interpolation with Corollary \ref{cor:MomentsBelow1} and Lemma \ref{lem:BlowUpH2}.
\end{proof}

We are now in the position to prove local existence.
\begin{theorem}\label{thm:LocalExistence}
There is a (random) time $T>0$ such that \eqref{eq:TransformedEquation} with $\lambda\leq 0$ or $\sigma<1$ has almost surely a (unique) solution $v$ in $C([0,T];H^{\gamma}(\w^\delta))$ for $\gamma\in (1,2)$ and $\delta<(1-\frac{\gamma}{2})\delta_0$ and such that the random variable $\|v_\varepsilon -v\|_{C([0,T];H^{\gamma}(\w^\delta)}$ converges to 0 in probability. 
\end{theorem}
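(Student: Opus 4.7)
The plan is to extract an almost surely convergent subsequence from the approximations $v_\varepsilon$ by showing that along the dyadic scale $\varepsilon=2^{-k}$ the sequence $(v_{2^{-k}})$ is Cauchy in $C([0,T^*];H^\gamma(\w^\delta))$ on some positive random time interval $[0,T^*]$. The main obstacle is that Lemma~\ref{lem:ConvergenceDyadicSequence} controls differences only up to an exponential factor $\exp(CT(\|\vk e^{-\Yk}\|_{L^\infty}^{2\sigma}+\|\vkp e^{-\Ykp}\|_{L^\infty}^{2\sigma}))$, so I must first obtain a $k$-uniform control of the $L^\infty$ norms on a positive time interval, which will be achieved by a stopping-time/bootstrap argument.

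\textbf{Stopping times and uniform differences.} Fix $\gamma\in(1,2)$ and $\delta<(1-\gamma/2)\delta_0$, and set $R:=4(\|v_0\|_{H^\gamma(\w^\delta)}+1)$. Define
\[
\tau_k:=\inf\{t\ge 0:\|\vk(t)\|_{H^\gamma(\w^\delta)}\ge R\}\wedge 1\,.
\]
Since $\vk\in C([0,1];H^\gamma(\w^\delta))$ by the well-posedness statement recalled after \eqref{eq:MollifiedEquation}, we have $\tau_k>0$. By Sobolev embedding (Lemma~\ref{lem:BesovProperties}(ii)) and Corollary~\ref{cor:BoundeY}, on $[0,\tau_k]$ one obtains a $k$-independent almost surely finite bound $\|\vk e^{-\Yk}\|_{L^\infty(\RR^2)}\le M$. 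Substituted into Lemma~\ref{lem:ConvergenceDyadicSequence} this yields
\[
\|\vk-\vkp\|_{C([0,\tau_k\wedge\tau_{k+1}];H^\gamma(\w^\delta))}\le B\,K_k\,2^{-k\kappa}\,,
\]
with $B$ an almost surely finite, $k$-independent random constant. Since $(K_k)$ is almost surely bounded, the tail $\epsilon_{k_0}:=\sum_{k\ge k_0}B\,K_k\,2^{-k\kappa}$ tends to $0$ almost surely as $k_0\to\infty$.

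\textbf{Bootstrap.} Choose a random $k_0$ such that $\epsilon_{k_0}<R/4$. Since $\|v_0\|_{H^\gamma(\w^\delta)}<R/4$, continuity of $v_{2^{-k_0}}$ into $H^\gamma(\w^\delta)$ gives a random $T^*>0$ with $\|v_{2^{-k_0}}(t)\|_{H^\gamma(\w^\delta)}<R/2$ on $[0,T^*]$; in particular $\tau_{k_0}\ge T^*$. Arguing inductively, suppose $\tau_j\ge T^*$ for $j=k_0,\dots,k-1$. Then on $[0,T^*]$
\[
\|\vk(t)\|_{H^\gamma(\w^\delta)}\le \|v_{2^{-k_0}}(t)\|_{H^\gamma(\w^\delta)}+\sum_{j=k_0}^{k-1}\|v_{2^{-j}}-v_{2^{-j-1}}\|_{C([0,T^*];H^\gamma(\w^\delta))}<\tfrac{R}{2}+\tfrac{R}{4}<R\,,
\]
so $\tau_k\ge T^*$. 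Hence $\inf_{k\ge k_0}\tau_k\ge T^*$, and $(\vk)$ is Cauchy in $C([0,T^*];H^\gamma(\w^\delta))$, converging almost surely to some $v$.

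\textbf{Equation, uniqueness, full limit.} Choosing $\alpha<1$ and $\gamma$ close enough to $2$ that $\gamma+\alpha>2$, the paraproduct/multiplication estimate Lemma~\ref{lem:BesovProperties}(iv) makes the product $\D v\cdot \D Y$ well-defined in a suitable weighted Besov space; combined with the convergence estimates of subsection~\ref{subsec:Noise} (notably \eqref{eq:BoundSmoothenedNoise}, \eqref{eq:ConvergenceeY}, \eqref{eq:ConvergenceSmoothTerm}) this allows one to pass to the limit $k\to\infty$ in \eqref{eq:MollifiedEquation} and conclude that $v$ solves \eqref{eq:TransformedEquation} on $[0,T^*]$. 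Uniqueness follows from an energy estimate for the difference of two solutions analogous to the one in the proof of Lemma~\ref{lem:ConvergenceDyadicSequence}, using the pointwise bound $||x|^{2\sigma}x-|y|^{2\sigma}y|\le C(|x|^{2\sigma}+|y|^{2\sigma})|x-y|$ together with Gronwall. Finally, convergence of $v_\varepsilon$ in probability along the full parameter $\varepsilon\to 0$ follows since from any subsequence $\varepsilon_n\to 0$ a sub-subsequence can be extracted along which the same dyadic/bootstrap argument yields almost sure convergence to $v$.
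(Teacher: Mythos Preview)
Your proof is correct and takes essentially the same approach as the paper: both rely on Lemma~\ref{lem:ConvergenceDyadicSequence} together with a continuity/bootstrap argument to obtain a $k$-uniform bound on $\|\vk\|_{C([0,T];H^\gamma(\w^\delta))}$ on some positive random interval, whence the dyadic sequence is Cauchy. The only difference is packaging: the paper encodes the bootstrap as the self-referential inequality $M_T^N \le K(1+\|v_0\|)\,e^{CTM_T^N}$ with $M_T^N=\sup_{k\le N}\|\vk\|_{C([0,T];H^\gamma(\w^\delta))}$ and then takes $T$ small, while you make the same argument explicit via stopping times and induction (one minor point: your inductive step should be carried out on $[0,\tau_k\wedge T^*]$ and concluded by contradiction at $t=\tau_k$, since the difference bound for $j=k-1$ is a priori only available on $[0,\tau_{k-1}\wedge\tau_k]$).
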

\begin{proof}
Let $M_T^N:=\sup_{k\leq N} \|\vk\|_{C([0,T];H^\gamma(\w^\delta))}$, we then have summing the estimate in Lemma \ref{lem:ConvergenceDyadicSequence} 
\begin{align*}
M_T^N &\leq \|v_1\|_{C([0,T];H^\gamma(\w^\delta))}+ \sum_{k=1}^\infty K_k 2^{-k\kappa} (1+\|v_0\|_{H^2(\w^{\delta_0})}) e^{CT M^N_T} \\
&\leq K(1+\|v_0\|_{C([0,T];H^2(\w^\delta))}) e^{CT M^N_T}\,,
\end{align*} 
where the random constant $K$ is finite almost surely and moreover, by Minkowski's inequality, in any $L^p(\mathbb{P}),\, p\in [1,\infty)$. Consequently there is a random time $T$, independent of $N$ such that for any $N\in \mathbb{N}$
\begin{align*}
M^N_T \leq 2 K(1+\|v_0\|_{C([0,T];H^2(\w^\delta))})
\end{align*}
and thus we have that $\|\vk\|_{C([0,T];H^\gamma(\w^\delta))}$ is uniformly bounded. Reinserting this in Lemma \ref{lem:ConvergenceDyadicSequence} shows that $\vk$ is a Cauchy sequence and thus we can conclude convergence to a $v$ that solves \eqref{eq:TransformedEquation}. Uniqueness of the solution follows by standard arguments.

 The convergence of probablity follows similar as in \cite{SchroedingerTorus} by considering first $\|v_\varepsilon -\vk\|_{C([0,T];H^{\gamma}(\w^\delta))}$: Redoing then the proof of Lemma \ref{lem:ConvergenceDyadicSequence} but bounding $\|\vk\|_{H^{3/2}(\w^\delta)}$, $\|\vk e^{-\Yk}\|_{L^\infty}$ directly instead of applying Lemma \ref{lem:BlowUpH2} we can let $k\rightarrow \infty$ and the resulting estimate yields the convergence in probability.  
\end{proof}

\section{Global existence for $\sigma<1/2$}
\label{sec:GlobalExistence}
In the case $\sigma<1/2$ we now prove that there is a global solution to \eqref{eq:TransformedEquation}. Let us first modify the famous Brezis-Gallouet inequality for our purposes
\begin{lemma}\label{lem:BrezisGallouet}
For the solution $v_\varepsilon$ of \eqref{eq:TransformedEquation} with $\lambda\leq 0$ or $\sigma<1$ we have for $\gamma\in (1,2)$ and $\kappa>0$ and $\delta<(\gamma-1)\delta_0$
\begin{align*}
\|v_\varepsilon e^{-Y_\varepsilon}\|_{C([0,T];L^\infty(\RR^2))}\leq K_\varepsilon +(1+|\log \varepsilon|^{1+\kappa})(1+\|v_0\|_{H^1(\w^{\delta_0})}^a)+\log(1+ \|v_\varepsilon\|_{C([0,T];H^{\gamma}(\w^{-\delta})})\,,
\end{align*}
for some deterministic $a>0$
\end{lemma}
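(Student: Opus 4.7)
The plan is to apply a Brezis--Gallouet-type inequality to $u_\varepsilon := v_\varepsilon e^{-Y_\varepsilon}$, using a Wick renormalization of $|\D Y_\varepsilon|^2$ to keep the $|\log\varepsilon|$-loss linear.

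I first establish the following log-type embedding for $s>1$:
\begin{align*}
\|f\|_{L^\infty(\RR^2)}\le C\bigl(1+\|f\|_{H^1(\RR^2)}^2+\log(1+\|f\|_{H^s(\RR^2)})\bigr).
\end{align*}
This follows from $f=\sum_j\varDelta_j f$ and Bernstein's inequality $\|\varDelta_j f\|_{L^\infty}\le C 2^j\|\varDelta_j f\|_{L^2}$ in two dimensions: splitting at some level $J$ and applying Cauchy--Schwarz gives $\|f\|_{L^\infty}\le C\sqrt{J+2}\,\|f\|_{H^1}+C 2^{J(1-s)}\|f\|_{H^s}$, after which optimizing in $J$ and converting the standard $\sqrt{\log}$-factor via Young's inequality $ab\le a^2/2+b^2/2$ yields the stated form.

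The bulk of the proof lies in bounding $\|u_\varepsilon\|_{H^1(\RR^2)}^2$. Since $\D u_\varepsilon=e^{-Y_\varepsilon}(\D v_\varepsilon-v_\varepsilon\D Y_\varepsilon)$ and $\|u_\varepsilon\|_{L^2(\RR^2)}^2=\tilde N(v_0)$ by mass conservation,
\begin{align*}
\|u_\varepsilon\|_{H^1(\RR^2)}^2\lesssim \int|\D v_\varepsilon|^2 e^{-2Y_\varepsilon}\,\dd x+\int|v_\varepsilon|^2|\D Y_\varepsilon|^2 e^{-2Y_\varepsilon}\,\dd x+\tilde N(v_0).
\end{align*}
The first integral is bounded by $K_\varepsilon(1+\|v_0\|_{H^1(\w^{\delta_0})}^a)$ via the energy-identity computation of Proposition \ref{prop:H1Bound}. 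For the second, a naive use of Lemma \ref{lem:NoiseLp} combined with H\"older would produce a $|\log\varepsilon|^2$-factor, which is too weak for the required exponent $1+\kappa$. Instead I use the Wick decomposition $|\D Y_\varepsilon|^2=\wDYe+c_\varepsilon$ with $c_\varepsilon:=\EE|\D Y_\varepsilon(0)|^2\sim |\log\varepsilon|$. The $\wDYe$-contribution is handled by duality and paraproducts (Lemma \ref{lem:BesovProperties}(iii)--(iv)) together with Corollary \ref{cor:MomentsBelow1}, yielding only a $K_\varepsilon(1+\|v_0\|^a)$-type term, while the $c_\varepsilon$-part equals $c_\varepsilon\tilde N(v_0)$---\emph{linear} in $|\log\varepsilon|$---and is therefore bounded by $(1+|\log\varepsilon|^{1+\kappa})(1+\|v_0\|^a)$ after a trivial Young estimate.

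To control the logarithm term, I fix $s\in(1,\gamma)$ close enough to $1$ so that $\delta\cdot s/(\gamma-s)<\delta_0$, which is permitted by the hypothesis $\delta<(\gamma-1)\delta_0$. Lemma \ref{lem:BesovProperties}(iv) gives $\|u_\varepsilon\|_{H^s(\RR^2)}\le C\|v_\varepsilon\|_{H^s(\w^{\delta'})}\|e^{-Y_\varepsilon}\|_{C^s(\w^{-\delta'})}$ for a small $\delta'>0$, and interpolation (Lemma \ref{lem:BesovInterpolation}) between the assumed $\|v_\varepsilon\|_{H^\gamma(\w^{-\delta})}$ and the moment bound of Corollary \ref{cor:MomentsBelow1} yields
\begin{align*}
\|v_\varepsilon\|_{H^s(\w^{\delta'})}\le C\|v_\varepsilon\|_{H^\gamma(\w^{-\delta})}^{s/\gamma}\bigl[K_\varepsilon(1+\|v_0\|^a)\bigr]^{1-s/\gamma}.
\end{align*}
The H\"older norm $\|e^{-Y_\varepsilon}\|_{C^s(\w^{-\delta'})}$ is finite for fixed $\varepsilon$ with at most polynomial $\varepsilon^{-1}$-blow-up, so its log is $O(|\log\varepsilon|)$. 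Taking $\log(1+\cdot)$ and combining with the previous bounds, with every leftover $K_\varepsilon$ and $|\log\varepsilon|$ absorbed into $K_\varepsilon+(1+|\log\varepsilon|^{1+\kappa})(1+\|v_0\|^a)$, gives the claim. The chief difficulty is the Wick renormalization in the $H^1$-step: its \emph{linear} (rather than quadratic) $|\log\varepsilon|$-dependence produces the exponent $1+\kappa$, which is exactly what allows the Gronwall argument of Lemma \ref{lem:ConvergenceDyadicSequence} to close for $\sigma<1/2$.
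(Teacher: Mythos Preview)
Your proof is correct and follows the same overall architecture as the paper's: apply a Brezis--Gallouet inequality to $u_\varepsilon=v_\varepsilon e^{-Y_\varepsilon}$, control $\|u_\varepsilon\|_{H^1}^2$ via mass/energy plus the cross term $\int |v_\varepsilon|^2|\D Y_\varepsilon|^2 e^{-2Y_\varepsilon}$, and then interpolate inside the logarithm to recover $\|v_\varepsilon\|_{H^\gamma(\w^{-\delta})}$.

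The genuine difference lies in the cross term. You decompose $|\D Y_\varepsilon|^2=\wDYe+c_\varepsilon$ and handle the Wick part by duality against $\mathcal{C}^{-1/2}$ (uniform in $\varepsilon$), leaving only the deterministic $c_\varepsilon\tilde N(v_0)\sim|\log\varepsilon|$. The paper instead applies H\"older directly, $\|v_\varepsilon e^{-Y_\varepsilon}\D Y_\varepsilon\|_{L^2}\le \|v_\varepsilon e^{-Y_\varepsilon}\|_{L^q(\w^{\tilde\delta})}\|\D Y_\varepsilon\|_{L^{q'}(\w^{-\tilde\delta})}$, and invokes Lemma~\ref{lem:NoiseLp}. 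Your remark that ``a naive use of Lemma~\ref{lem:NoiseLp} would produce a $|\log\varepsilon|^2$-factor'' takes the stated bound $\EE\|\D Y_\varepsilon\|_{L^p}^q\le C|\log\varepsilon|^q$ at face value; in fact the sharp bound (coming from $\EE|\D Y_\varepsilon(x)|^2\sim|\log\varepsilon|$ and Gaussian hypercontractivity) is $C|\log\varepsilon|^{q/2}$, so the H\"older route also yields a linear $|\log\varepsilon|$ contribution to $\|u_\varepsilon\|_{H^1}^2$. Your Wick argument sidesteps this sharpness issue entirely and makes the linear dependence completely transparent, which is a cleaner way to see why $\sigma<1/2$ is the threshold.

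A second, minor difference: for the logarithmic factor the paper passes to the H\"older norm $\|u_\varepsilon\|_{\mathcal{C}^{\gamma-1}}$ and then drops to $\mathcal{C}^{\gamma'-1}$ with $\gamma'-1\in(0,1)$, so that $\|e^{-Y_\varepsilon}\|_{\mathcal{C}^{\gamma'-1}(\w^{-\delta'})}$ is uniformly bounded by \eqref{eq:BoundSmoothenedNoise} and contributes only a $K_\varepsilon$ inside the log. You instead keep $\|e^{-Y_\varepsilon}\|_{\mathcal{C}^s(\w^{-\delta'})}$ with $s>1$, accept its polynomial blow-up in $\varepsilon^{-1}$, and absorb the resulting $O(|\log\varepsilon|)$ into the right-hand side. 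Both are legitimate; the paper's choice avoids the extra $|\log\varepsilon|$ bookkeeping.
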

\begin{proof}
Choose $\delta_0'\in (0,\delta_0)$ and $\gamma'\in (1,\gamma)$ large enough so that we have $\delta<\frac{\gamma-\gamma'}{\gamma'}\delta_0'$ and thus $\delta':=\frac{\gamma-\gamma'}{\gamma}\delta_0'-\frac{\gamma'}{\gamma}\delta\in (0,\delta_0)$. We use the Brezis-Gallouet inequality as proved in \cite{Ozawa} which gives us
\begin{align*}
\|v_\varepsilon e^{-Y_\varepsilon}\|_{L^\infty(\RR^2)}&\leq (1+\|v_\varepsilon e^{-Y_\varepsilon}\|_{H^1(\RR^2)}) \,\sqrt{1+\log(1+\|v_\varepsilon e^{-Y_\varepsilon}\|_{\mathcal{C}^{\gamma-1}(\RR^2)})} \\
&\leq (1+\|v_\varepsilon e^{-Y_\varepsilon}\|_{H^1(\RR^2)})\sqrt{1+\log(1+K_\varepsilon \|v_\varepsilon\|_{\mathcal{C}^{\gamma'-1}(\w^{\delta'})})} \\
&\leq K_\varepsilon +\|v_\varepsilon e^{-Y_\varepsilon}\|_{H^1(\RR^2)}^2+\log(1+\|v_\varepsilon\|_{H^{\gamma'}(\w^{\delta'})})\\
&\leq K_\varepsilon +\|v_\varepsilon e^{-Y_\varepsilon}\|_{H^1(\RR^2)}^2+\log(1+\|v_\varepsilon\|_{H^{\gamma}(\w^{-\delta})}^{\frac{\gamma'}{\gamma}}\|v_\varepsilon|_{L^2(\w^{\delta_0'})}^{\frac{\gamma-\gamma'}{\gamma}})\,.
\end{align*}
Note that we have by the product rule
\begin{align*}
\|v_\varepsilon e^{-Y_\varepsilon}\|_{H^1(\RR^2)}\leq  C(\|v_{\varepsilon} e^{-Y_\varepsilon}\|_{L^2(\RR^2)}+\|\D v_\varepsilon e^{-Y_\varepsilon}\|_{L^2(\RR^2)}+\|v_\varepsilon e^{-Y_\varepsilon} \D Y_\varepsilon \|_{L^2(\RR^2)})\,.
\end{align*}
While the first term is bounded by conservation of mass, the second can be bounded by conservation of energy \eqref{eq:EnergyEstimate}. For the last we apply H\"older's inequality and Lemma \ref{lem:NoiseLp} to bound it by
\begin{align}
\label{eq:BrezisBoundH1}
\|v_\varepsilon e^{-Y_\varepsilon} \D Y_\varepsilon \|_{L^2(\RR^2)}\leq \|v_\varepsilon e^{-Y_\varepsilon}\|_{L^q(\w^{\tilde{\delta}})} \|\D Y_\varepsilon\|_{L^{q'}(\w^{-\tilde{\delta}})} \leq K_\varepsilon (1+\|v_0\|_{H^1(\w^{\delta_0})}^a) \,|\log\varepsilon|^{1+\kappa/2}\,,
\end{align}
for some small $\tilde{\delta}>0$ and $\frac{1}{2}=\frac{1}{q}+\frac{1}{q'}$ with $q'\in(2,\infty)$ large enough so that $\tilde{\delta}\cdot q'>2$ and where we used Corollary \ref{cor:MomentsBelow1} and \ref{lem:NoiseLp} in the second step. Note that the constants $K_\varepsilon$ are bounded almost surely for dyadic $\varepsilon=2^{-k}$ by Lemma \ref{lem:NoiseLp} and Borel-Cantelli. 

Inserting \eqref{eq:BrezisBoundH1} above shows the desired inequality. 
\end{proof}

The combination of Lemma \ref{lem:BrezisGallouet} with \ref{lem:BlowUpH2} and \ref{lem:ConvergenceDyadicSequence} gives global existence if $\sigma<\frac{1}{2}$.

\begin{theorem}\label{thm:GlobalExistence}
The equation \eqref{eq:TransformedEquation} with $\lambda=0$ or $\sigma\in(0,1/2)$ has for every $T>0$ a unique solution $v\in C([0,T];H^\gamma(\w^{\delta}))$ for $\gamma\in (1,2)$ and $\delta<(1-\frac{\gamma}{2})\delta_0$. The solutions $v_\varepsilon$ of \eqref{eq:MollifiedEquation} converge in probability in $C([0,T];H^{\gamma}(\w^\delta))$ to $v$.
\end{theorem}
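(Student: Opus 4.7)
The overall strategy is to combine the three main inputs of this and the previous section to obtain an $\varepsilon$-dependent but globally-in-time a priori bound on $\|v_\varepsilon\|_{C([0,T];H^2(\w^{-\delta}))}$, and then use this bound to control the exponential factor in Lemma \ref{lem:ConvergenceDyadicSequence} well enough so that the dyadic sequence $(\vk)$ converges almost surely on any fixed time interval $[0,T]$.

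The first step combines Lemma \ref{lem:BrezisGallouet} with Lemma \ref{lem:BlowUpH2}. The crucial observation is that Lemma \ref{lem:BrezisGallouet} bounds $\|v_\varepsilon e^{-Y_\varepsilon}\|_{L^\infty}$ by $\log(1+\|v_\varepsilon\|_{H^\gamma(\w^{-\delta})})$ plus $\varepsilon$- and $v_0$-dependent terms. Raising to the $2\sigma$-th power (using sub-additivity since $2\sigma<1$) and inserting into Lemma \ref{lem:BlowUpH2} yields roughly
\[
  \|v_\varepsilon\|_{H^2(\w^{-\delta})}\leq K_\varepsilon(1+\|v_0\|_{H^2(\w^{\delta_0})}^a)\,(1+|\log\varepsilon|^a)\,e^{CT\log^{2\sigma}(1+\|v_\varepsilon\|_{H^\gamma(\w^{-\delta})})}.
\]
Since $\sigma<1/2$ gives $2\sigma<1$, one has $\log^{2\sigma}(x)=o(\log x)$, so for every $\eta>0$ the exponential is bounded by $C_\eta(1+\|v_\varepsilon\|_{H^\gamma(\w^{-\delta})}^\eta)$. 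Using $\|v_\varepsilon\|_{H^\gamma(\w^{-\delta})}\leq \|v_\varepsilon\|_{H^2(\w^{-\delta})}$ (since $\gamma<2$), one arrives at an inequality of the shape $M_\varepsilon\leq \Phi_\varepsilon(1+M_\varepsilon^\eta)$ with $\eta<1$ and $M_\varepsilon:=\|v_\varepsilon\|_{C([0,T];H^2(\w^{-\delta}))}$, which closes via a standard continuity argument (using that the approximating $v_\varepsilon$ is already known to be in $C([0,T];H^2(\w^{-\delta}))$) to give an a priori bound $M_\varepsilon\leq C\Phi_\varepsilon^{1/(1-\eta)}$ valid for \emph{all} $T>0$.

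The decisive point is the $\varepsilon$-dependence of $\Phi_\varepsilon$. Choosing $\kappa>0$ in Lemma \ref{lem:BrezisGallouet} small enough that $(1+\kappa)\cdot 2\sigma<1$, one sees $\Phi_\varepsilon\leq K_\varepsilon\,\exp(C|\log\varepsilon|^\beta)$ for some $\beta<1$, up to polynomial-in-$|\log\varepsilon|$ prefactors. For dyadic $\varepsilon=2^{-k}$ this yields $\|\vk e^{-\Yk}\|_{L^\infty}^{2\sigma}\leq K_k+Ck^\beta$ almost surely, so the exponential factor in Lemma \ref{lem:ConvergenceDyadicSequence} is of order $e^{CTk^\beta}$, easily beaten by the geometric gain $2^{-k\kappa}$. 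Summing therefore yields $\sum_k\|\vk-\vkp\|_{C([0,T];H^\gamma(\w^\delta))}<\infty$ a.s., so $(\vk)$ is Cauchy and its limit $v$ solves \eqref{eq:TransformedEquation} in $C([0,T];H^\gamma(\w^\delta))$ for every $T>0$. Uniqueness follows by running the same difference estimate on two hypothetical solutions, and the convergence in probability of $v_\varepsilon$ for arbitrary $\varepsilon\in(0,1]$ is obtained exactly as in the proof of Theorem \ref{thm:LocalExistence}: estimate $\|v_\varepsilon-\vk\|_{C([0,T];H^\gamma(\w^\delta))}$ by redoing Lemma \ref{lem:ConvergenceDyadicSequence} with $\|\vk\|_{H^{3/2}}$ and $\|\vk e^{-\Yk}\|_{L^\infty}$ bounded directly via the a priori estimate, then let $k\to\infty$.

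The main obstacle is the delicate book-keeping ensuring $(1+\kappa)\cdot 2\sigma<1$ can simultaneously be arranged with the other smallness conditions on $\kappa$ in Lemmas \ref{lem:BrezisGallouet} and \ref{lem:NoiseLp}; this is precisely where $\sigma<1/2$ enters essentially, providing just enough room to absorb both the logarithmic cost of Brezis-Gallouet and the $|\log\varepsilon|^{1+\kappa}$ factor originating from the $L^p$-control of $\D Y_\varepsilon$. If $\sigma\geq 1/2$, the quantity $e^{CT\log^{2\sigma}(x)}$ would grow at least polynomially with exponent bounded away from $0$, and the self-bounding argument for $\|v_\varepsilon\|_{H^2(\w^{-\delta})}$ would no longer close — mirroring the more delicate analysis that would be required to reach $\sigma<1$.
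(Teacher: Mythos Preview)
Your proposal is correct and follows essentially the same approach as the paper: insert Lemma~\ref{lem:BrezisGallouet} into Lemma~\ref{lem:BlowUpH2}, exploit $2\sigma<1$ via $e^{CT\log^{2\sigma}(1+x)}\leq C_\eta(1+x)^\eta$ to close the a~priori bound, choose $\kappa$ so that $2\sigma(1+\kappa)<1$ to tame the $|\log\varepsilon|$ factor, and then feed everything into Lemma~\ref{lem:ConvergenceDyadicSequence} where $2^{-k\kappa}$ dominates $e^{Ck^\beta}$ for $\beta<1$. The only cosmetic difference is that you close directly on $\|v_\varepsilon\|_{H^2(\w^{-\delta})}$ whereas the paper interpolates first to $H^\gamma(\w^\delta)$; also, your ``continuity argument'' is really just the algebraic fact that $M\leq A(1+M^\eta)$ with $\eta<1$ and $M<\infty$ forces $M\leq C A^{1/(1-\eta)}$.
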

\begin{proof}
We assume $\sigma\in (0,1/2)$, since the linear case $\lambda=0$ is trivially included in this range.  
Inserting the Brezis-Gallouet inequality in Lemma \ref{lem:BrezisGallouet} in the bound in Lemma \ref{lem:BlowUpH2} gives for some $a>0$
\begin{align*}
\|\vk\|_{H^{\gamma}(\w^\delta)}&\leq C (1+\|v_0\|_{H^2(\w^{\delta_0})}^a)\,\|\vk\|_{H^{2}(\w^{-\delta'})}^{\frac{\gamma}{2}}
 \\
 &\leq e^{K_k} e^{(1+\|v_0\|_{H^2(\w^{\delta_0})}^a)(1+|\log \varepsilon|)^{2\sigma(1+\kappa)}} e^{\log(1+\|v\|_{H^{\gamma}(\w^\delta)})^{2\sigma}}  \,,
\end{align*}
for some small $\delta'>0$ and $\kappa>0$. Note that for $s\in (0,1)$ and any $\kappa'>0$ $e^{\log(1+x)^s}\leq C x^{\kappa'}$, so that we can reformulate this estimate as
\begin{align*}
\|\vk\|_{H^{\gamma}(\w^\delta)}&\leq e^{K_k} e^{(1+\|v_0\|_{H^2(\w^{\delta_0})}^a)(1+|\log \varepsilon|)^{2\sigma(1+\kappa)}}
\end{align*}
Reinserting this into the Brezis-Gallouet inequality, Lemma \ref{lem:BrezisGallouet}, yields
\begin{align*}
\|v_\varepsilon e^{-Y_\varepsilon}\|_{C([0,T];L^\infty(\RR^2))}\leq K_\varepsilon +(1+|\log \varepsilon|^{1+\kappa})(1+\|v_0\|_{H^2(\w^{\delta_0})}^a)\,,
\end{align*}
where we choose $\kappa>0$ small enough such that $s:=2\sigma(1+\kappa)<1$.
Combining this estimate with Lemma \ref{lem:ConvergenceDyadicSequence} we end up with 
\begin{align*}
&\|\vk-\vkp\|_{C([0,T];H^\gamma(\w^\delta))}\leq   2^{-k\kappa'} e^{K_k} e^{(1+\|v_0\|_{H^2(\w^{\delta_0})})(1+|\log 2^{-k}|)^{s}}\leq C(v_0) e^{K_k} 2^{-k \kappa'/2}\,.\,
\end{align*}
for some $\kappa'>0$ and where we used $s<1$ in the second step. We therefore conclude that $\vk$ is a Cauchy sequence whose limit $v\in C([0,T];H^{\gamma}(\w^\delta))$ solves \eqref{eq:TransformedEquation}. For the convergence of $v_\varepsilon$ in probability to $v$ we proceed as in Theorem \ref{thm:LocalExistence}.
\end{proof}

{\footnotesize
\bibliography{nls-Ref}
\bibliographystyle{alpha}}
\end{document}